\documentclass[12pt]{amsart}
\usepackage{epsfig,color}
\usepackage{blindtext}
\usepackage{graphicx}
\usepackage{float}
\usepackage[shortlabels]{enumitem}
\usepackage{url}
\usepackage{amssymb}
\usepackage{graphicx,import}
\usepackage{comment}
\usepackage{esint}
\usepackage{xcolor}
\usepackage{mathtools}
\usepackage{comment}
\usepackage{tikz}
\usepackage{thmtools}
\usepackage{thm-restate}
\usepackage{hyperref}
\usepackage{cleveref}

\declaretheorem[name=Lemma,numberwithin=section]{lemm}

\usepackage[margin = 1in] {geometry}

\usepackage{hyperref}
\usepackage{dsfont}

\newtheorem{theorem}{Theorem}[section]

\newtheorem{proposition}[theorem]{Proposition}
\newtheorem{lemma}[theorem]{Lemma}

\theoremstyle{definition}

\newtheorem{definition}[theorem]{Definition}
\newtheorem*{remark*}{Remark}

\newtheorem*{theorem*}{Theorem}
\newtheorem{remark}[theorem]{Remark}

\numberwithin{equation}{section}

\newcommand{\R}{\mathbb{R}}

\newcommand{\N}{\mathbb{N}}
\newcommand{\Z}{\mathbb{Z}}

\newcommand{\eps}{\varepsilon}

\newcommand{\length}{\mathrm{length}}

\DeclareMathOperator{\dmn}{domain}
\DeclareMathOperator{\sing}{sing}
\newcommand{\red}[1]{\textcolor{red}{#1}}

\newcommand{\purple}[1]{\textcolor{purple}{#1}}
\newcommand{\nl}{\newline}

\newcommand{\p}{\partial}

\newcommand{\RR}{\mathbb{R}}
\renewcommand{\P}{\mathbb{P}}
\newcommand{\met}{\text{Met}}
\newcommand{\cS}{\mathcal{S}}
\newcommand{\cI}{\mathcal{I}}

\newcommand{\cV}{\mathcal{V}}
\newcommand{\stplus}{(S^2)^+}
\newcommand{\embeds}{\hookrightarrow}
\newcommand{\cF}{\mathcal{F}}
\newcommand{\cZ}{\mathcal{Z}}
\newcommand{\bF}{\mathbf{F}}

\title{The p-widths of the Hemisphere}
\author{Jared Marx-Kuo}
\begin{document}

\begin{abstract}
We compute the p-widths, $\{\omega_p\}$, for the hemisphere with the standard round metric. This provides the first example of a manifold with boundary for which the $p$-widths are known for all $p$.
\end{abstract}
\maketitle
\tableofcontents
\section{Introduction}
%
In \cite{gromov2006dimension}, Gromov introduced the \textit{$p$-widths}, $\{\omega_p\}$, of a Riemannian manifold, $(M^{n+1}, g)$, as an analogue of the spectrum of the Laplacian, $\{\lambda_k\}$, the discrete collection of eigenvalues. Intuitively, one replaces the Rayleigh quotient in the definition of eigenvalues with the area functional in the definition of $\omega_p$ - see \cite{gromov2010singularities} \cite{gromov2002isoperimetry} \cite{guth2009minimax} \cite{fraser2020applications} for more background. The $p$-widths have proven to be extremely useful due to the Almgren--Pitts/Marques--Neves Morse theory program for the area functional (see \cite{almgren1965theory}\cite{pitts2014existence}\cite{marques2014min} \cite{marques2023morse} \cite{marques2015morse} \cite{marques2021morse} \cite{almgren1962homotopy} \cite{zhou2020multiplicity}). In \cite{liokumovich2018weyl}, Liokumovich--Marques--Neves demonstrated that $\{\omega_p\}$ satisfy a Weyl law, which later led to a resolution of Yau's conjecture \cite{yau1982problem}: Any closed three-dimensional manifold must contain an infinite number of immersed minimal surfaces. In some sense, the resolution was stronger than expected:
\begin{theorem}[\cite{chodosh2020minimal} \cite{song2018existence} \cite{marques2019equidistribution} \cite{marques2017existence} \cite{irie2018density} \cite{zhou2020multiplicity}] \label{yauThm}
On any closed manifold $(M^{n+1}, g)$ with $3 \leq n + 1 \leq 7$, there exist infinitely many embedded minimal hypersurfaces.
\end{theorem}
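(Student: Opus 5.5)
This is a cited theorem, and the standard proof combines the Weyl law of \cite{liokumovich2018weyl} with a dichotomy on the metric. I would argue by contradiction: suppose $(M^{n+1},g)$ with $3\le n+1\le 7$ contains only finitely many embedded minimal hypersurfaces.

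The first step is to apply the Almgren--Pitts/Marques--Neves min-max theory. For each $p$, it produces a stationary integral varifold realizing $\omega_p(M,g)$. By the regularity theory valid in dimensions $3\le n+1\le 7$, this varifold is supported on a smooth closed embedded minimal hypersurface, possibly with integer multiplicities. Hence each $\omega_p$ is a finite integer combination of areas of the finitely many minimal hypersurfaces, so $\{\omega_p\}$ lies in a countable set. By the Weyl law, $\omega_p\sim a(n)\,\mathrm{vol}(M)^{n/(n+1)}p^{1/(n+1)}$, and in particular $\omega_p$ grows sublinearly.

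The second step treats generic metrics, where I would follow Irie--Marques--Neves \cite{irie2018density}. Perturb $g$ to $g_t=(1+th)g$. The widths $\omega_p(g_t)$ are Lipschitz in $t$. The Weyl law forces the total variation of $\mathrm{vol}^{n/(n+1)}$ under the perturbation to be detected by some $\omega_p$, which therefore must vary. But if there were no minimal hypersurface meeting $\mathrm{supp}\,h$, the areas of all minimal hypersurfaces would be unchanged, the set of possible values of $\omega_p$ would stay countable and fixed, and continuity would force $\omega_p$ to be constant. This contradiction yields density for generic (bumpy) metrics, and in particular infinitely many hypersurfaces. An arbitrary metric is not covered by this step.

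The third step, for arbitrary metrics, splits into two cases, following Song \cite{song2018existence} and Marques--Neves \cite{marques2017existence}.

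In the first case, $M$ contains a minimal hypersurface that is not ``Frankel-type'', e.g.\ a one-sided or unstable configuration that separates $M$ into a region with stable boundary. One then cuts $M$ along the finitely many stable minimal hypersurfaces and attaches cylindrical ends to obtain a noncompact core. A Weyl law for this cylindrical manifold forces the $p$-widths of the core to be realized by infinitely many distinct minimal hypersurfaces lying in its interior. This is Song's argument proving Yau's conjecture in full.

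In the second case, every pair of minimal hypersurfaces intersects (the Frankel property). Then Marques--Neves show that the min-max hypersurfaces realizing $\omega_p$ for $p\to\infty$ must be distinct. If only finitely many existed, the Lusternik--Schnirelmann inequality $\omega_p<\omega_{p+1}$ would fail for large $p$, because the sublinear growth is incompatible with the values $\omega_p$ being integer combinations of finitely many areas with bounded multiplicity. Here the multiplicity one theorem of Zhou \cite{zhou2020multiplicity} removes the multiplicity issue.

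I expect the main obstacle to be the non-generic case. Controlling multiplicity and index of the min-max limits, and building the cylindrical-end construction so that the widths do not escape into the ends, is the hard analytic core; I would rely on Song's cylindrical Weyl law there.
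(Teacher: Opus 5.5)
The paper does not prove this statement; it is quoted as background, and the proof lives in the cited works. Measured against that literature, your architecture is the right one: Marques--Neves when the Frankel property holds, and Song's core construction otherwise. The Irie--Marques--Neves step only concerns Baire-generic metrics and contributes nothing for an arbitrary metric.

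The genuine gap is in the case you actually argue, the Frankel case. There you appeal to Zhou's multiplicity-one theorem. That is a theorem about bumpy (generic) metrics and does not cover an arbitrary metric with the Frankel property, which is exactly the situation of your Step 3. Without it, counting integer combinations does not close the argument. With $k$ hypersurfaces, the values $\sum_{j=1}^k m_j\,\mathrm{area}(\Sigma_j)\le T$ can number on the order of $T^k$. For $T=Cp^{1/(n+1)}$ this is $p^{k/(n+1)}$, which exceeds $p$ for large $p$ once $k>n+1$. So sublinear growth alone gives no contradiction.

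The missing idea is the actual role of the Frankel property. The support of a min-max varifold realizing $\omega_p$ is a disjoint union of some of the connected $\Sigma_j$. Since any two of them intersect, it is a single $\Sigma_{j_p}$, so $\omega_p=m_p\,\mathrm{area}(\Sigma_{j_p})$ with no a priori bound on $m_p$. The set $\{m\,\mathrm{area}(\Sigma_j): m\in\N,\ 1\le j\le k\}$ has at most $kT/\min_j\mathrm{area}(\Sigma_j)$ elements in $(0,T]$. On the other hand, finiteness gives $\omega_1<\omega_2<\cdots<\omega_p\le Cp^{1/(n+1)}$, through the Marques--Neves Lusternik--Schnirelmann argument. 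Hence $p\le C'p^{1/(n+1)}$, a contradiction for large $p$. Multiplicity is harmless here, and no multiplicity-one input is needed. Conversely, with multiplicity one the admissible values would be finitely many subset sums and the growth rate would play no role. That is a different argument, available only where Zhou's theorem applies.

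Your other case is a pointer to Song rather than an argument, and its trigger is misdescribed. The dichotomy is whether two disjoint closed minimal hypersurfaces exist. If they do, minimizing area in the region between them produces the stable minimal hypersurfaces one cuts along. This yields a compact core $N$ with stable minimal boundary, and the cylindrical ends are attached to $N$. All of the substance is in Song's paper: control of the widths of the cylindrical manifold, and the fact that they are realized by minimal hypersurfaces in the interior of $N$. Citing that is acceptable for a cited theorem, but it means this case is not established by your sketch.
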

\noindent We refer the reader to the introduction of \cite{marx2024isospectral} for more information. In theorem \ref{yauThm}, the dimension upper bound is due to the existence of singular minimal hypersurfaces when $n+1 \geq 8$ (however, see an analogous theorem by Li \cite{li2023existence}), while the lower bound is due to the fact that min-max on surfaces may only produce stationary geodesic networks in general (see \cite{pitts1973net}, \cite[Remark 1.1]{marques2015morse}). \newline 
\indent In a breakthrough \cite{chodosh2023p}, Chodosh--Mantoulidis used the Allen--Cahn equation to prove that the $p$-widths can always be realized as the sum of lengths of a union of closed geodesics. Building on this, Chodosh--Cholsaipant \cite[Thm 1.3]{chodosh2025p} proved an analogous theorem for polygonal domains in $\R^2$, as well as surfaces with geodesic boundary.
\begin{theorem}[\cite{chodosh2025p}] \label{ChoChoRegularity}
For $(M^2, g)$ a riemannian surface with geodesic boundary and $p = 1,2, \dots$, there is a finite set of billiard trajectories, $\{\gamma_{p,i}\}_{i = 1}^{N_p}$ (possibly with repetition) such that 
\begin{equation} \label{billiardeqn}
\omega_p(M^2, g) = \sum_{i = 1}^{N_p} \ell(\gamma_{p,i})
\end{equation}
\end{theorem}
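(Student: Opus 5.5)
The strategy is to follow Chodosh--Mantoulidis \cite{chodosh2023p} and run Allen--Cahn min-max in place of Almgren--Pitts, since on surfaces the Allen--Cahn limits have much better regularity (integer multiplicity geodesics) than Almgren--Pitts limits (stationary geodesic networks). The new ingredient is the boundary. Because $\partial M$ is geodesic, we double $M$ across $\partial M$ to get a closed smooth surface $(\tilde M, \tilde g)$ carrying an isometric involution $\tau$ with fixed set $\partial M$. We then work with $\tau$-equivariant phase transitions, which amounts to the Neumann problem on $M$. The steps, in order, are:
\begin{enumerate}
\item Define the Allen--Cahn $p$-widths $\omega_{p,\eps}(M)$ by min-max of the energy $E_\eps(u)=\int_M \frac{\eps}{2}|\nabla u|^2+\frac{W(u)}{\eps}$ over $p$-sweepouts in $H^1(M)$, with no boundary condition. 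Using the doubling, these are exactly the min-max values of $\tau$-invariant sweepouts on $\tilde M$, each halved.
\item Show that $\lim_{\eps\to0}\omega_{p,\eps}/(2\sigma)=\omega_p(M)$, where $\sigma$ is the energy constant of the one-dimensional heteroclinic solution. This follows Gaspar--Guaraco and Dey, adapted to relative cycles. One inequality is a recovery construction from Almgren--Pitts sweepouts; the other uses the isoperimetric and compactness theory for relative cycles mod $\partial M$.
\item For each $\eps$, obtain a min-max critical point $u_\eps$ that solves the Neumann problem, with Morse index at most $p$ and uniformly bounded energy. Its reflection $\tilde u_\eps$ is then a $\tau$-invariant critical point on $\tilde M$ with index at most $2p$ (or at most $p$ in the invariant class).
\item Apply the Chodosh--Mantoulidis curvature estimates and the classification of the limit varifold on closed surfaces to $\tilde u_\eps$. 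The associated varifolds converge to $\sum m_j \tilde\gamma_j$, a finite union of closed geodesics with integer multiplicities, and the union is $\tau$-invariant.
\item Restrict to $M$. Each $\tau$-invariant closed geodesic either lies in $M$ (or in $\tau M$, paired with its reflection), or it crosses $\partial M$. If it crosses, it reflects at $\partial M$, so its portion in $M$ is a billiard trajectory: a geodesic chord meeting the boundary at equal angles, or one that hits the boundary orthogonally and retraces itself. Halving the total length then gives \eqref{billiardeqn}.
\end{enumerate}

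I expect the main obstacle to be the step-4 classification near the fixed set $\partial M$. Chodosh--Mantoulidis rule out junctions using index and curvature bounds together with a sheet-by-sheet analysis. Two difficulties arise here. A geodesic of $\tilde M$ could be tangent to $\partial M$, or could coincide with a component of $\partial M$ itself. Also, the index bound only holds in the equivariant class, so the stability arguments have to be run with $\tau$-invariant test functions.

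To handle tangency I would first use uniqueness of geodesics. A geodesic tangent to the totally geodesic $\partial M$ lies in $\partial M$, so it is a boundary component, which is a legitimate degenerate billiard trajectory and can be counted as such. For the equivariant index, I would note that the local analysis in Chodosh--Mantoulidis uses only index bounds on small balls. On a small ball centered on $\partial M$, any non-invariant destabilizing function can be symmetrized or antisymmetrized, and at least one of the two remains destabilizing. This gives local index control that suffices for their curvature estimates.
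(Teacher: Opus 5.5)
The paper gives no proof of this statement. It imports it from \cite{chodosh2025p} and only records the lifting of billiard trajectories to the double in Remark \ref{doublingRemark}. Your route (Neumann Allen--Cahn on $M$, even reflection, Chodosh--Mantoulidis on $\tilde M$, fold back) is the natural one. However, step 5 has a genuine hole at the boundary components.

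Since $\tilde u_\eps$ is even, $E^{\tilde M}_\eps(\tilde u_\eps)=2E^M_\eps(u_\eps)$, so $\omega_p(M)=\frac12\|\tilde V\|(\tilde M)$ with $\tilde V=\sum_j m_j\tilde\gamma_j$. Two cases halve correctly to $m_j$ copies of a billiard trajectory:
\begin{itemize}
\item a non-invariant pair $\{\tilde\gamma_j,\tau\tilde\gamma_j\}$;
\item an invariant geodesic crossing $\partial M$.
\end{itemize}
The third case fails. Suppose a component $\beta$ of $\partial M$ occurs in $\tilde V$ with multiplicity $m$. Its contribution is $\frac m2\ell(\beta)$. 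Counting $\beta$ as a billiard trajectory (it is a closed geodesic of $M$) only helps if $m$ is even, since half of a boundary component is not a billiard trajectory.

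So you must prove $m$ is even, and here the evenness of $\tilde u_\eps$ is used essentially, not just for the index. Away from finitely many points of $\beta$, the curvature estimates from your step 4 give $m$ graphical transition layers converging to $\beta$, each modeled on a strictly monotone heteroclinic. A $\tau$-invariant layer would have to coincide with $\beta$, with a strictly monotone profile across it. That contradicts $\partial_\nu\tilde u_\eps=0$ on $\partial M$, so $\tau$ pairs the layers off. Equivalently, $\tilde u_\eps$ takes the same well value at the two ends of a short normal segment to $\beta$, so up- and down-transitions balance.

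This is not cosmetic. It is exactly why the boundary equator $\gamma_3$ enters Lemma \ref{lem:perturbedMetrics} with an integer coefficient $n_3$. An odd $m$ would add the value $\frac12(2\pi+\mu)$ and break the count in \S\ref{ProofMain}.

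Two smaller points.
\begin{itemize}
\item Your worry that the index bound holds only in the invariant class is unfounded, as your own step 3 suggests. Odd functions vanish on $\partial M$, so the odd index of $\tilde u_\eps$ is at most the Dirichlet index of $u_\eps$. Because $H^1_0(M)\subset H^1(M)$, that is at most the Neumann index, which is $\le p$. Hence $\mathrm{ind}(\tilde u_\eps)\le 2p$ on $\tilde M$, and no symmetrization argument is needed.
\item Your claim that the double is a smooth closed surface is false in general. In Fermi coordinates $g=dt^2+f(s,t)^2ds^2$, geodesic boundary gives $\partial_t f=0$ on $\partial M$, but $\partial_t^3 f=-\partial_\nu K$ there. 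So $\tilde g$ is in general only $C^{2,1}$. You should either check that the Chodosh--Mantoulidis estimates only need this much regularity, or restrict to the smooth-double case as the paper does in Remark \ref{doublingRemark}; that case suffices for the hemi-ellipsoids used here.
\end{itemize}
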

While the p-widths have been useful in their ability to show the existence of many minimal surfaces, they are notoriously difficult to compute for all $p$. Chodosh--Mantoulidis provided the first example of knowing all $p$-widths for a fixed manifold:
\begin{theorem}[\cite{chodosh2023p}, Thm 1.4] \label{SphereWidths}
For $g_0$ the unit round metric on $S^2$
\[
\forall p \in \N, \qquad \omega_p(S^2, g_0) = 2 \pi \lfloor \sqrt{p} \rfloor
\]
\end{theorem}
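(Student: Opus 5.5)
The plan is to prove matching upper and lower bounds: first show $\omega_p(S^2,g_0)\le 2\pi\lfloor\sqrt p\rfloor$, then show the reverse inequality using the regularity theorem of Chodosh--Mantoulidis together with a topological lower bound.

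\emph{Upper bound.} I will use the zero sets of homogeneous polynomials. Let $V_d$ be the space of homogeneous polynomials of degree $\le d$ in $(x,y,z)$, restricted to $S^2$; these are spherical harmonics of degree $\le d$, so $\dim V_d=(d+1)^2$. The projectivized family
\[
\{\,\{P=0\}\cap S^2 : P\in V_d\setminus\{0\}\,\}
\]
is a $\mathbb{RP}^{(d+1)^2-1}$-family, hence a $p$-sweepout for every $p\le (d+1)^2-1$. By Crofton's formula, each zero set of a degree-$d$ polynomial restricted to $S^2$ has length at most $2\pi d$: a generic great circle meets it in at most $2d$ points, and the average of that count is $\length/\pi$. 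Taking $d=\lfloor\sqrt p\rfloor$, we have $p< (d+1)^2$, so $\omega_p\le 2\pi\lfloor\sqrt p\rfloor$. One technical point remains: since the sweepout is posed in the space of flat cycles mod 2, I must verify that this family is continuous in the flat topology and has no concentration of mass. This I expect to follow from Guth's or Aiex's treatment of such families.

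\emph{Lower bound.} By the Chodosh--Mantoulidis regularity result, $\omega_p$ is a sum of lengths of closed geodesics, i.e.\ great circles on $S^2$. Hence $\omega_p\in 2\pi\N$, and we may write $\omega_p=2\pi k_p$. Since the $\omega_p$ are nondecreasing, it then suffices to show strict increase at perfect squares, that is,
\[
\omega_{d^2}\ge 2\pi d .
\]
I would attempt this via a Lusternik--Schnirelmann argument: if $\omega_p=\omega_{p+N}$ for large $N$, then the critical set of unions of great circles at that level has large cohomological dimension (Marques--Neves/Aiex). The set of unions of $k$ great circles is $(\mathbb{RP}^2)^k/S_k$, which has dimension $2k$. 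Combining this with the upper bound, I expect to conclude that $\omega_p$ can equal $2\pi k$ only for at most about $2k+1$ consecutive values of $p$. Counting then forces $\omega_p=2\pi\lfloor\sqrt p\rfloor$: the number of $p$ with $\lfloor \sqrt p\rfloor=k$ is exactly $2k+1$, so there is no room for slack.

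\emph{Main obstacle.} The hard step is the Lusternik--Schnirelmann multiplicity bound. Specifically, I need to show that if $\omega_p=\dots=\omega_{p+m}$, then the space of varifolds realizing that value detects $\bar\lambda^{m}$. Since that space is $\le 2k$-dimensional, this gives $m\le 2k$. Proving this requires careful handling of multiplicity and of the topology near the critical set. I would first try Aiex's argument for $S^2$ and adapt the Allen--Cahn phase-transition setting used by Chodosh--Mantoulidis, where the corresponding spectral-flow statement is cleaner.
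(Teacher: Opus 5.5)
Your upper bound is fine. It is the same polynomial/Crofton sweepout Chodosh--Mantoulidis use, adapted in Lemma \ref{lem:perturbedMetrics} here. Your final counting is also correct: if each value $2\pi k$ is attained by at most $2k+1$ of the $\omega_p$, the values are forced.

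\textbf{The gap} is the step that is supposed to supply that bound. The claim is: ``if $\omega_p=\cdots=\omega_{p+m}=2\pi k$, then the critical set detects $\overline{\lambda}^m$, and since it is $2k$-dimensional, $m\le 2k$.'' This is not an available result, and as stated it rests on a misidentification of the critical set.
\begin{itemize}
\item The Chodosh--Mantoulidis regularity theorem tells you the \emph{value} $\omega_p$ is a sum of lengths of great circles. It does not tell you that every stationary varifold in the critical set of a pulled-tight min-max sequence at that level is a union of great circles.
\item The Lusternik--Schnirelmann deformation is carried out in the Almgren--Pitts setting (cf.\ Proposition \ref{prop:am.vary.dom}). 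There the critical set consists of stationary geodesic networks. On the round sphere, for $k\ge 3$, there are networks of length $2\pi k$ that are not unions of great circles, e.g.\ $2k$ half great circles from $N$ to $S$ whose unit directions sum to zero but are not antipodally paired.
\item The passage from varifolds to mod $2$ cycles is not injective: a doubled great circle is the zero cycle. So ``$\dim(\mathbb{RP}^2)^k/S_k=2k$'' does not translate into vanishing of $\overline\lambda^{2k+1}$ on an $\mathcal{F}$-neighborhood of the relevant cycles.
\end{itemize}
Controlling the cohomology of neighborhoods of a degenerate, positive-dimensional critical set in $\mathcal{Z}_1$ is exactly the hard part, and you defer it entirely.

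\textbf{The missing idea}, used by Chodosh--Mantoulidis and adapted in Sections \ref{lemmasSection}--\ref{ProofMain}, is to perturb the metric instead of analyzing the degenerate one.
\begin{itemize}
\item For ellipsoidal metrics $g_\mu\to g_0$ (Theorem \ref{theo:good.metric.final}), the only closed geodesics below a given length are iterates of three simple ones, of lengths $2\pi$, $2\pi+\mu$, $2\pi+2\mu$. There are also no non-tangential stationary varifold Jacobi fields, so the set of stationary networks at each level has finite iterated-limit rank.
\item Only a ``zero-dimensional'' Lusternik--Schnirelmann argument is then needed. Loops in small $\mathcal{F}$-balls are contractible, so $\overline\lambda$ vanishes near the critical set, which gives $\omega_p(g_\mu)<\omega_{p+1}(g_\mu)$ (compare Proposition \ref{widthInequality}).
\item The number $2k+1$ then arises not as a dimension but as the number of distinct values $2\pi k+\mu(n_2+2n_3)$ with $n_1+n_2+n_3=k$.
\item There are exactly $(d+1)^2-1$ such values in $(0,2\pi d+1]$. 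They must coincide with the $(d+1)^2-1$ strictly increasing widths, which are bounded by $2\pi d+1$. This pins down each $\omega_p(g_\mu)$ to within $2k\mu$ of $2\pi\lfloor\sqrt p\rfloor$.
\item Continuity of $\omega_p$ in the metric then gives the round case.
\end{itemize}
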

In \cite{marx2024isospectral}, the author showed that a connected component of Zoll metrics on $S^2$ have the same values of $\{\omega_p\}$:
\begin{theorem} \label{IsoSpectralThm}
Let $g$ be any Zoll metric on $S^2$ which lies in the connected component of $g_{0}$ in the space of Zoll metrics on $S^2$. Then $\omega_p(g) = \omega_p(g_{0}) = 2 \pi \lfloor \sqrt{p} \rfloor$ for all $p$.
\end{theorem}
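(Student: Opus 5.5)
The plan is to combine three ingredients: the Chodosh--Mantoulidis regularity theorem (the $p$-widths are sums of lengths of closed geodesics), the rigidity of geodesic lengths on a Zoll surface, and the continuity of $\omega_p$ under changes of the metric. Together these show that $\omega_p$ takes values in a discrete set yet varies continuously on the connected component of $g_0$. It is therefore constant there, equal to its value at $g_0$, which Theorem \ref{SphereWidths} gives as $2\pi\lfloor \sqrt{p}\rfloor$.

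\textbf{Step 1: the values lie in a lattice.} Normalize Zoll metrics so that every geodesic is closed with period $2\pi$. This normalization is preserved along the component of $g_0$, since the common period varies continuously and $g_0$ has period $2\pi$. By Gromoll--Grove, every geodesic of a Zoll metric on $S^2$ is simple and all have the same least period $2\pi$. Hence any closed geodesic, possibly traversed with multiplicity, has length in $2\pi\N$. By the Chodosh--Mantoulidis theorem (\cite{chodosh2023p}), $\omega_p(g)$ is realized as a finite sum of lengths of closed geodesics. Consequently $\omega_p(g) \in 2\pi \Z_{\geq 0}$ for every Zoll metric $g$ in the component.

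\textbf{Step 2: continuity.} Suppose two metrics satisfy $(1+\lambda)^{-1} g \leq g' \leq (1+\lambda) g$. Then the mass of any $1$-cycle changes by at most the factor $(1+\lambda)^{1/2}$. The admissible $p$-sweepouts are the same for both metrics, since the flat topology on cycles mod $2$ is equivalent under comparable metrics. Taking the min-max therefore gives
\[
(1+\lambda)^{-1/2}\,\omega_p(g) \leq \omega_p(g') \leq (1+\lambda)^{1/2}\,\omega_p(g).
\]
So $g \mapsto \omega_p(g)$ is continuous in the $C^0$ topology, and hence in any finer topology (e.g.\ $C^\infty$) in which the space of Zoll metrics and its components are taken.

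\textbf{Step 3: conclusion.} Restricted to the connected component $\mathcal{C}$ of $g_0$, the map $\omega_p$ is continuous by Step 2 and takes values in the discrete set $2\pi\Z_{\geq 0}$ by Step 1. A continuous map from a connected space to a discrete set is constant. Hence $\omega_p(g) = \omega_p(g_0) = 2\pi\lfloor\sqrt p\rfloor$ for all $g \in \mathcal{C}$ and all $p$. If one prefers a path argument, one may join $g$ to $g_0$ by a continuous path $g_t$, using local path-connectedness of the Zoll space as parametrized by Guillemin's deformation theory. The connectedness argument, however, needs no such path.

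\textbf{Main obstacle.} I expect the main care to be needed in Step 1. One must make sure that the geodesics produced by Chodosh--Mantoulidis are genuine closed geodesics, possibly with multiplicity and possibly via immersed or limiting configurations, and not geodesic nets or degenerate objects. One must also make sure that each of them has length an integer multiple of $2\pi$, with no shorter exceptional closed geodesics. The first point is exactly the content of the Chodosh--Mantoulidis regularity theorem. For the second I would invoke Gromoll--Grove, which rules out exceptional geodesics of period $2\pi/k$ on Zoll $2$-spheres.
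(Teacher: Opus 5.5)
The paper does not prove this statement. It only quotes it from \cite{marx2024isospectral}. Your argument is correct, and it is the natural route to the cited result. It combines Chodosh--Mantoulidis regularity, the fact that primitive closed geodesics on a Zoll sphere all have length $2\pi$ (Gromoll--Grove), continuity of $\omega_p$ under bi-Lipschitz changes of metric, and constancy of a continuous, discrete-valued function on a connected set.

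One sentence in Step 1 is false. Continuity of the common period does \emph{not} force the normalization ``all geodesics have length $2\pi$'' to persist along the component. The homotheties $c\,g_0$, $c>0$, form a continuous path of Zoll metrics through $g_0$. They have period $2\pi\sqrt{c}$ and satisfy $\omega_p(c\,g_0)=\sqrt{c}\,\omega_p(g_0)$. So on the space of all Zoll metrics, read literally, the statement fails.

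The normalization must instead be part of the definition of the space: period $2\pi$, or equivalently area $4\pi$ by Weinstein's theorem. This is standard and is what the statement implicitly intends. With that reading, Step 1 needs no justification and the rest of your proof goes through unchanged.

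If you prefer the unnormalized space, apply your argument to $(2\pi/L_g)^2 g$. This rescaling depends continuously on $g$, since $L_g^2=\pi\,\mathrm{Area}(g)$, so it maps the component of $g_0$ into the normalized component. The conclusion is then $\omega_p(g)=L_g\lfloor\sqrt{p}\rfloor$.
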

The author also extended the computation of theorem \ref{SphereWidths} to the real projective plane in \cite{marx2025p}.
\begin{theorem} \label{rptwowidths}
For $\overline{g_0}$ the quotient of the unit round metric on $\R \P^2$
\[
\forall p \in \N, \qquad \omega_p(\R \P^2, \overline{g_0}) = 2 \pi \Big\lfloor \frac{1}{4} (1 + \sqrt{1 + 8 p} )\Big\rfloor
\]   
\end{theorem}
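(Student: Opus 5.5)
The plan is to follow the strategy of Chodosh--Mantoulidis for Theorem \ref{SphereWidths}, adapted to the antipodal quotient $\pi: S^2 \to \R\P^2$. We prove a lower bound and an upper bound that meet at the same value. By the Allen--Cahn / Almgren--Pitts regularity theory (the analogue of Theorem \ref{ChoChoRegularity} for closed surfaces, due to Chodosh--Mantoulidis), each $\omega_p(\R\P^2,\overline{g_0})$ is a sum of lengths of closed geodesics. On $\R\P^2$ every closed geodesic is the image of a great circle, traversed once (length $\pi$) or twice (length $2\pi$). The formula predicts only multiples of $2\pi$. So the first step is to rule out odd multiples of $\pi$. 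The natural reason is that the configurations arising in min-max are boundaries mod $2$ (or limits of Allen--Cahn level sets). A one-sided projective line is not null-homologous mod $2$, so an odd total number of one-sided lines cannot occur. Hence $\omega_p \in 2\pi\N$.

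For the \textbf{upper bound} we use sweepouts by zero sets of homogeneous polynomials. An even homogeneous polynomial of degree $2k$ on $\R^3$, restricted to $S^2$, descends to $\R\P^2$. The space of such polynomials, modulo lower even-degree harmonics, has dimension $\dim \mathcal{P}_{2k}(\R^3) = \binom{2k+2}{2} = (k+1)(2k+1)$. Its projectivization gives a $p$-sweepout for every $p \le (k+1)(2k+1)-1$. By Crofton's formula, the zero set of a degree-$2k$ polynomial on $S^2$ has length at most $2\pi\cdot 2k$. On the quotient this becomes $2\pi k$, and we need a careful check that the degenerate members (for example squares of lower-degree polynomials) are counted with multiplicity correctly. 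This gives $\omega_p \le 2\pi k$ whenever $p \le (k+1)(2k+1)-1$. Note that $\lfloor \tfrac14(1+\sqrt{1+8p})\rfloor = k$ exactly when $k(2k-1) \le p < (k+1)(2k+1)$, which is consistent with this bound.

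For the \textbf{lower bound} we use the Lusternik--Schnirelmann inequality: if $\omega_p = \omega_{p+N}$, then the critical set carries a family of dimension at least $N$. Combined with the discreteness from the first step, this gives the argument. Suppose $\omega_p \le 2\pi(k-1)$ for some $p \ge k(2k-1)$. Every stationary configuration of total length $2\pi j$ with $j \le k-1$ is a union of great-circle images. The space of such configurations has dimension at most $2\cdot(\text{number of lines})$, which is at most $4j$ or $2j$ depending on the count. More to the point, it lies in the zero-set family of degree-$2j$ polynomials. We then count: the number of indices $p$ for which $\omega_p$ can equal $2\pi j$ is bounded by (dimension of that critical family) $+1$. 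Summing over $j < k$ shows that the first $k(2k-1)$ widths already exhaust all values $\le 2\pi(k-1)$. This is the step I expect to be the main obstacle. It requires a precise dimension count of the varieties of unions of projective lines (multiplicity-two lines and pairs of lines) and matching it against the jumps at $p = k(2k-1)$. This is also exactly where the count differs from the $S^2$ case. As a first attempt I would mimic the Chodosh--Mantoulidis argument: replace the count $(j+1)^2$ for spherical harmonics with $(j+1)(2j+1)$, and check that the Lusternik--Schnirelmann jumps line up with $k(2k-1)$.
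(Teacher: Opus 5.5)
This paper does not reprove the statement; it quotes it from \cite{marx2025p}. The relevant comparison is therefore the Chodosh--Mantoulidis strategy that the paper adapts for the hemisphere in \S\ref{ProofMain}.

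\textbf{Upper bound.} This part is fine. Degree-$2k$ homogeneous polynomials are even, so $\{f<0\}$ descends and the zero sets are mod $2$ boundaries in $\R\P^2$. The space has dimension $(k+1)(2k+1)$; no quotient by lower harmonics is needed, since it already restricts to the span of harmonics of degrees $0,2,\dots,2k$. Crofton then gives mass $\le 2\pi k$.

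\textbf{Lower bound.} This is where the gap is. The quantitative Lusternik--Schnirelmann principle you invoke (``$\omega_p=\omega_{p+N}$ forces an $N$-dimensional critical family, so at most $\dim+1$ indices share a level'') is not an available theorem in Almgren--Pitts theory. What is available (\cite[Prop.~6.11]{chodosh2023p}, Proposition \ref{widthInequality} here) is only qualitative: $\omega_p=\omega_{p+1}$ is impossible when the relevant stationary integral varifolds have no $N$-th iterated limit points.

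At the round metric, the critical set at level $2\pi j$ is a continuum (all configurations of $2j$ projective lines), so that statement gives nothing. Even granting a dimension version, you would still have to locate this set in the cycle space and compute its cohomological index. Your count is also not pinned down. Using $\dim \mathrm{Sym}^{2j}(\R\P^2)=4j$ gives $\sum_{j<k}(4j+1)=k(2k-1)-1$, the right number. But the degree-$2j$ zero-set family you also mention has dimension $(j+1)(2j+1)-1$ and gives nothing.

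The missing idea is to break the degeneracy rather than count it:
\begin{enumerate}
\item The ellipsoid metrics of Theorem \ref{theo:good.metric.final} are centrally symmetric, so they descend to metrics $\overline{g_\mu}$ on $\R\P^2$. There, every closed geodesic below a fixed length is an iterate of one of three nondegenerate projective lines, with lengths in arithmetic progression $\pi<\pi+\mu<\pi+2\mu$ after renaming $\mu$.
\item Prove $\omega_p(\overline{g_\mu})<\omega_{p+1}(\overline{g_\mu})$, as in \cite[Prop.~6.11]{chodosh2023p}.
\item A total of $J$ lines (with multiplicity) yields exactly the $2J+1$ values $J\pi+t\mu$, $0\le t\le 2J$. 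Restricting to even $J=2i\le 2k$ gives $\sum_{i=1}^{k}(4i+1)=(k+1)(2k+1)-1$ values. This is exactly the number of indices whose widths are $\le 2\pi k+1$ by your upper bound.
\item Distinctness of the widths then forces $\omega_p(\overline{g_\mu})\in[2\pi k,\,2\pi k+4k\mu]$ for $k(2k-1)\le p<(k+1)(2k+1)$. Continuity of $\omega_p$ in the metric finishes the proof.
\end{enumerate}

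\textbf{Parity.} Your parity step is also unjustified as stated, and in the argument above it is indispensable. If odd $J$ were allowed, there would be $\sum_{J=1}^{2k}(2J+1)=4k^2+4k$ candidate values below $2\pi k+1$ against only $2k^2+3k$ widths, and nothing would be forced.

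The min-max output is only a \emph{varifold limit} of boundaries, and mod $2$ parity does not survive varifold convergence. For example, take many thin loops, each bounding a thin disk, placed over alternating short arcs of a projective line $\ell$. These converge as varifolds to $\ell$ with multiplicity one, while their flat limit is $0$.

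What must actually be proved is that the total multiplicity $\sum_i m_i$ of the three lines in the limit is even. This has to come from the finer structure of the Allen--Cahn limit. One way:
\begin{itemize}
\item Lift $u_\varepsilon$ to an even function on $S^2$.
\item Use that its nodal set converges graphically, with $m_i$ sheets, near all but finitely many points of the $i$-th great ellipse.
\item Along a path from a point of an octant to its antipode that crosses each great ellipse once at such points, the number of sign changes of $u_\varepsilon$ is $\equiv\sum_i m_i \pmod 2$.
\item Evenness of the lift forces the sign to return to its initial value, so $\sum_i m_i$ is even.
\end{itemize}
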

\indent In related work, Ambrozio--Marques--Neves \cite{ambrozio2024rigidity}, showed that $(\R \P^2, \overline{g_0})$ is p-width spectrally \textit{rigid}, i.e.
\begin{theorem}[Thm A, \cite{ambrozio2024rigidity}]
Let $(M^{n+1}, g)$ a compact riemannian manifold. If 
\[
\omega_p(M, g) = \omega_p(\R \P^2, \overline{g_0})
\]
for all $p \in \mathbb{N}^+$, then $(M, g)$ is isometric to $(\R \P^2, \overline{g_0})$.
\end{theorem}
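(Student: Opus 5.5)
The plan is to recover, in order, the dimension, the area, the topology and finally the metric of $(M,g)$ from the sequence $\{\omega_p\}$, closing with an equality case of a sharp geometric inequality (Pu's systolic inequality).

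\textbf{Dimension and area.} By the Weyl law of Liokumovich--Marques--Neves \cite{liokumovich2018weyl}, $\omega_p(M^{n+1},g)\sim a(n)\,\mathrm{vol}(M)^{n/(n+1)}p^{1/(n+1)}$. On the other hand, by Theorem \ref{rptwowidths},
\[
2\pi\Big\lfloor \tfrac14\big(1+\sqrt{1+8p}\big)\Big\rfloor \sim 2\pi\sqrt{p/2}=\pi\sqrt{2p}.
\]
This growth rate forces $n+1=2$. For surfaces the constant is $\sqrt{\pi\,\mathrm{area}}$: this value is calibrated by Theorem \ref{SphereWidths}, since $2\pi\sqrt p=\sqrt{\pi\cdot 4\pi\, p}$. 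Matching constants gives $\pi\,\mathrm{area}(M)=2\pi^2$, so $\mathrm{area}(M,g)=2\pi=\mathrm{area}(\R\P^2,\overline{g_0})$.

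\textbf{Geodesics of length $2\pi$ and topology.} Next I would use the regularity theory of Chodosh--Mantoulidis \cite{chodosh2023p}: each $\omega_p$ is a sum of lengths of closed immersed geodesics. Evaluating the formula gives $\omega_1=\omega_2=\omega_3=2\pi$. The coincidence $\omega_1=\omega_3$ should, via a Lusternik--Schnirelmann argument on the space of cycles, produce infinitely many (indeed a continuum of) geodesic configurations of total length $2\pi$. Each such configuration is either a single closed geodesic of length $2\pi$ or a geodesic of length $\pi$ counted twice. I would then try to use the exact pattern of jumps in the formula to pin down the topology. The jumps occur at $p=3,10,21,\dots$, and their spacing should be compared with the jump structure $p=k^2$ of $S^2$, and with what an orientable surface or a surface of higher genus would give. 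The goal is to show that $M$ is non-orientable with Euler characteristic $1$, i.e.\ $M\cong\R\P^2$. I expect this to require comparing low $p$-widths with Lusternik--Schnirelmann-type sweepouts built from the $\Z_2$-cohomology of the cycle space.

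\textbf{Rigidity.} Once $M\cong\R\P^2$ with area $2\pi$, Pu's inequality gives
\[
\mathrm{sys}(g)^2\le \tfrac{\pi}{2}\,\mathrm{area}(M,g)=\pi^2,
\]
with equality only for the round metric. It therefore suffices to prove $\mathrm{sys}(g)\ge\pi$. The plan is to argue by contradiction: suppose there is a noncontractible loop $\gamma$ of length $L<\pi$. Build a $1$-sweepout of $M$ starting from $2\gamma$ (the boundary of the complementary disk) whose maximal slice has mass $<2\pi$. This would give $\omega_1<2\pi$, a contradiction. I expect this to be the main obstacle, since a short systole does not obviously yield a cheap sweepout. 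I would first try the family of level sets of the distance function to $\gamma$, controlling slice lengths by the fixed area $2\pi$ and coarea. If that fails, I would instead exploit the full continuum of length-$2\pi$ geodesics found above, aiming to show $g$ is Zoll with all geodesics one-sided of length $\pi$, and then appeal to Green's theorem that Zoll metrics on $\R\P^2$ are round.
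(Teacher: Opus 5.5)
The paper does not prove this statement; it only quotes it from Ambrozio--Marques--Neves. So I am measuring your outline against what a complete argument would need.

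\textbf{Dimension and area, and where the gaps begin.} Your first step is sound. The Liokumovich--Marques--Neves Weyl law, with $a(1)=\sqrt{\pi}$ from the Chodosh--Mantoulidis computation on $S^2$, forces $\dim M=2$ and $\mathrm{area}(M,g)=2\pi$. After that there are genuine gaps.

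The topology step has no mechanism. Nothing known ties the jump pattern of $\{\omega_p\}$ to orientability or Euler characteristic. Your jump data are also wrong: $2\pi\lfloor\frac14(1+\sqrt{1+8p})\rfloor$ jumps exactly at $p=k(2k-1)=1,6,15,28,\dots$. Hence $\omega_1=\cdots=\omega_5=2\pi<\omega_6=4\pi$, and a jump at $p=3$ would contradict your own $\omega_1=\omega_3$.

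Your list of length-$2\pi$ critical configurations is incomplete as well. It omits unions of two distinct closed geodesics of length $\pi$. In the round model these are the generic ones: zero sets of products of two linear forms, a $4$-parameter family consistent with the five-fold degeneracy. Moreover, a doubled one-sided geodesic is the zero cycle mod $2$, so it is invisible to the class $\overline{\lambda}$ that drives any Lusternik--Schnirelmann count.

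\textbf{The rigidity step.} As designed, this step fails. It uses only $M\cong\R\P^2$, $\mathrm{area}=2\pi$ and $\omega_1\geq 2\pi$. Combined with Pu, it would therefore show that the round metric is the unique maximizer of $\omega_1^2/\mathrm{area}$ over all metrics on $\R\P^2$. That is a sharp global diastolic inequality, much stronger than the theorem itself, and your construction cannot produce it. For the tubes $N_t$ around $\gamma$, the coarea formula gives only $\int_0^T\ell(\partial N_t)\,dt=2\pi$. This controls the average slice length, not $\sup_t\ell(\partial N_t)$, so nothing prevents a slice of length $\geq 2\pi$.

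What is actually missing is a way to exploit the full degeneracy $\omega_1=\cdots=\omega_5=2\pi$. One needs two things:
\begin{enumerate}
\item a Lusternik--Schnirelmann argument showing that the critical set at level $2\pi$ is cohomologically large;
\item an argument that the resulting geodesic configurations pass through every point in every direction, so that, for instance, every geodesic is closed of length $\pi$.
\end{enumerate}
That single structural conclusion would give both the topology and the rigidity. For the topology: if all geodesics are closed, $\pi_1$ is finite, so $M$ is $S^2$ or $\R\P^2$, and $S^2$ is excluded by Weinstein's area formula. For the rigidity: then $\mathrm{sys}=\pi$, and the equality case of Pu, or Green's theorem, gives roundness. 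Your fallback names this direction but supplies none of it, and it is the heart of the proof.
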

We also mention the work of Donato \cite{donato2022first} who computes the first few widths of the unit disk, as well as Aiex \cite{aiex2016width}, who computed the first few widths of ellipses, laying some of the foundation for theorem \ref{SphereWidths}. \nl 
\indent While computing p-widths has been more accessible on surfaces, there are several questions about the parameter $p$ and how it corresponds to geometric features of the underlying curves/billiard trajectories, as in equation \eqref{billiardeqn}. We mention related work by the author, Sarnataro, and Stryker, connecting the number of vertices and geometric index of the geodesics in \eqref{billiardeqn} to the number $p$ \cite{marx2024index}.
\subsection{Main Result + Ideas}
In this paper, we compute the full p-width spectrum for $\stplus$ with the standard metric, $g_{std}$, coming from restricting the unit round metric on $S^2$.
\begin{theorem} \label{mainTheorem}
For $g_{std}$ the standard metric on the hemisphere, $\stplus$, we have 
\[
\omega_p(\stplus, g_{std}) = \pi \cdot \Big\lfloor \frac{1}{2} \left(-1 + \sqrt{1 + 8p} \right) \Big\rfloor \quad \forall p \in \mathbb{N}^+
\]
Moreover, $\omega_p$ is achieved by sweepouts coming from polynomials in $x$ and $y$.
\end{theorem}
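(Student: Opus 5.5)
The formula is equivalent to the following: for $d\ge 1$,
\[
\omega_p(\stplus,g_{std})=\pi d \quad\text{whenever}\quad \tfrac{d(d+1)}{2}\le p<\tfrac{(d+1)(d+2)}{2}=:N_d .
\]
I plan to prove the upper bound with explicit polynomial sweepouts, and the lower bound by combining Theorem \ref{ChoChoRegularity} with a Lusternik--Schnirelmann counting argument.

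\textbf{Upper bound.} Let $V_d$ be the space of real polynomials in $x,y$ of degree $\le d$, so $\dim V_d=N_d$. The map $[P]\mapsto \{P(x,y)=0\}\cap\stplus$ from $\P(V_d)\cong\R\P^{N_d-1}$ should give an $(N_d-1)$-sweepout. This is the usual argument: the relative cycle is the boundary of $\{P<0\}\cap\stplus$, and the nontrivial loop $P\mapsto -P$ swaps the two sides. The cycle may be taken relative to $\p\stplus$.

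To bound the lengths, extend $Z_P=\{P(x,y)=0\}$ to all of $S^2$. This set is invariant under $z\mapsto -z$, so its length in $\stplus$ is half its length on $S^2$. On a great circle, $P(x,y)$ restricts to a trigonometric polynomial of degree $\le d$. Hence it has at most $2d$ zeros, unless it vanishes identically, which is a lower-dimensional event handled by a standard perturbation. Crofton's formula then gives $\length(Z_P)\le \tfrac14\cdot 2d\cdot 4\pi=2\pi d$ on $S^2$, so the length in $\stplus$ is at most $\pi d$. By monotonicity of the widths, $\omega_p\le \pi d$ for all $p\le N_d-1$, which is the upper bound.

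\textbf{Quantization.} By Theorem \ref{ChoChoRegularity}, each $\omega_p$ is a sum of lengths of billiard trajectories in $\stplus$. Each such trajectory consists of great-circle arcs, and reflecting across the equator turns the trajectory into a great circle of $S^2$. Because every great circle other than the equator meets the equator in antipodal points, each trajectory is a union of great semicircles, or it is the equator, which counts as two. Therefore $\omega_p\in\pi\Z$. We also have $\omega_1=\pi$, by the upper bound together with the fact that $\omega_1>0$.

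Doubling across the equator, a stationary configuration of total mass $\pi d$ in $\stplus$ becomes a reflection-invariant union of great circles on $S^2$ of total length $2\pi d$. Such a union consists of vertical great circles, each lying in a one-parameter family and contributing $\pi$ to the hemisphere length, together with mirror pairs of tilted circles, each pair lying in a two-parameter family and contributing $2\pi$. Hence the space $\mathcal{C}_d$ of stationary varifolds of mass $\pi d$ has covering dimension at most $d$.

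\textbf{Lower bound.} I would apply the Lusternik--Schnirelmann principle in its Almgren--Pitts/Marques--Neves form: if $\omega_p=\omega_{p+k}=L$, then the space of stationary varifolds of mass $L$ supports a nonzero $\bar\lambda^k$. Consequently it cannot be covered by fewer than $k+1$ sets that are contractible in the cycle space, and so its dimension is at least $k$. Applied to $\mathcal{C}_d$, this says $\omega$ takes the value $\pi d$ for at most $d+1$ consecutive values of $p$.

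Now induct on $d$. The widths are nondecreasing, lie in $\pi\Z$, and $\omega_p\le \pi d$ on $[d(d+1)/2,\,N_d)$, an interval of exactly $d+1$ values. So the value $\pi j$ can occupy at most $j+1$ indices for each $j$. Starting from $\omega_1=\pi$, this forces $\omega_p\ge \pi d$ once $p\ge d(d+1)/2$.

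\textbf{Main obstacle.} I expect the hardest step to be the Lusternik--Schnirelmann step in the boundary setting. It must be carried out for relative cycles, with critical sets coming from the Chodosh--Cholsaipant (Allen--Cahn) regularity. One must also justify that all varifolds realizing $\omega_p$ lie in the doubled-symmetric family $\mathcal{C}_d$, including multiplicity and the degenerate cases involving the equator, so that the dimension bound $\le d$ actually applies. I would first try to pass to the doubled sphere with its $\Z_2$-reflection symmetry, and then adapt the Chodosh--Mantoulidis argument for Theorem \ref{SphereWidths} equivariantly.
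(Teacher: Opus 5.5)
Your upper bound (polynomial sweepouts in $x,y$, Crofton, halving under $z\mapsto -z$) matches the paper's, and your quantization $\omega_p\in\pi\Z$ via Chodosh--Cholsaipant is correct. The gap is in the lower bound.

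\textbf{The LS principle you invoke is not available.} You use: if $\omega_p=\omega_{p+k}$, the critical set carries $\bar\lambda^k$ and so has dimension $\ge k$. No such theorem exists in Almgren--Pitts theory. The known Lusternik--Schnirelmann statements (Marques--Neves, Chodosh--Mantoulidis Prop.~6.11) only give $\omega_p<\omega_{p+1}$, and only when the critical set is discrete in a strong sense (no $N$-th iterated limit points). Also, $\bar\lambda$ lives on $\cZ_1(M,\p M;\Z_2)$, not on varifolds. A covering-dimension bound on a set of varifolds therefore does not control cup-lengths on its neighbourhood in cycle space.

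\textbf{The dimension bound is false for the set that must be controlled.} Even granting such a principle, that set is not your $\mathcal{C}_d$. The Almgren--Pitts step (cf.\ Proposition~\ref{prop:am.vary.dom}) only yields free-boundary stationary integral varifolds with few singular points, i.e.\ geodesic networks. Chodosh--Cholsaipant says the \emph{value} $\omega_p$ is realized by billiard trajectories, not that every critical varifold is one. On the round hemisphere these sets differ:
\begin{itemize}
\item Take $2d$ meridian arcs from the north pole with unit directions $v_1,\dots,v_{2d}$ satisfying $\sum_j v_j=0$. They meet the equator orthogonally and form a free-boundary stationary network of mass $\pi d$.
\item These networks come in a $(2d-2)$-dimensional family.
\item They double to reflection-invariant stationary networks on $S^2$ that are not unions of great circles, so your doubling claim is also false.
\end{itemize}
Hence $\dim\mathcal{C}_d\le d$ fails for $d\ge 3$. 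With the true dimensions (at least $1,2,4$ at levels $\pi,2\pi,3\pi$) your induction no longer closes: already $\omega_{10}\ge 4\pi$ is not forced.

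\textbf{The missing idea is the perturbation used in the paper, following Chodosh--Mantoulidis.}
\begin{itemize}
\item Pass to the hemi-ellipsoid metrics $g_\mu$. There the only short billiard trajectories are $\gamma_1,\gamma_2,\gamma_3$, of lengths $\pi,\pi+\mu,2\pi+\mu$ (Proposition~\ref{propbilliardrigid}). There are also no nontrivial stationary varifold Jacobi fields, so the stationary free-boundary networks at a given level have finite Cantor--Bendixson rank.
\item Then only the weak statement $\omega_p(g_\mu)<\omega_{p+1}(g_\mu)$ is needed (Proposition~\ref{widthInequality}). It is proved by doubling to $S^2$ and using Li--Zhou's free-boundary min--max.
\item Your dimension count is replaced by counting the $j+1$ distinct admissible values $\pi j+\mu k$, $0\le k\le j$, from Lemma~\ref{lem:perturbedMetrics}. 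Their total up to $j=d+1$ equals the number of indices $p\le D(d+1)-1$, which pins down every $\omega_p(g_\mu)$. Letting $\mu\to 0$ gives the result.
\end{itemize}
Your heuristic ``$d$-dimensional family of billiard unions $\leftrightarrow$ $d+1$ equal widths'' is essentially what this splitting into $j+1$ nearby values encodes. The perturbation is what makes it provable.
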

\noindent The proof is an adapation of the work done by Chodosh--Mantoulidis \cite{chodosh2023p} for the p-widths of $S^2$, with the following modifications:
\begin{enumerate}
    \item Applying the regularity theory of Chodosh--Cholsaipant \cite{chodosh2025p} to show that 
    \[
    \omega_p = \sum_{i = 1}^{N_p} \ell(\Gamma_{i,p})
    \]
    where $\Gamma_{i,p}$ is a closed billiard trajectory (see \S \ref{ACSection} for definition).
    
    \item Considering sweepouts by level sets of polynomials of the form $f(x,y)$ of degree at most $d$. These can be thought of as the $z$-independent subset of polynomials of the form $f(x,y) + zg(x,y) \in \R^3[x,y,z]/(x^2 + y^2 + z^2 = 1)$, which were the original set of polynomial sweepouts on $S^2$.
    
    \item Perturbing the standard metric on the hemisphere to a metric $g_{\mu}$ so that $\{z = 0\}$ corresponds to a closed geodesic of length $2 \pi + \mu$ and $\{x = 0\}, \{y = 0\}$ correspond to free boundary geodesics of lengths $\pi, \pi + \mu$. The other natural choices (informed by Chodosh--Mantoulidis' \cite{chodosh2023p} original construction) of $\{z = 0\}$ having length $2 \pi$ or $2 \pi + 2 \mu$ are not compatible with the counting argument of section \S \ref{ProofMain}.

    \item Proving that for the perturbed metrics, $\omega_p(g_{\mu}) < \omega_{p+1}(g_{\mu})$ for $p$ fixed and $\mu$ sufficiently small. This requires a slightly technical modification of the analogous proposition 6.11 of \cite{chodosh2023p}, accounting for the fact that we have a surface with boundary.
\end{enumerate}
%
%
In \S \ref{mmBackground}, we define the basic notions of min-max theory as relevant to this paper. In \S \ref{lemmasSection}, we recall some essential propositions and lemmas from \cite{chodosh2023p} and \cite{chodosh2025p}. We also construct the modified sweepouts to attain an upper bound for $\omega_p(\stplus)$. In \S \ref{perturbSection}, we prove the perturbative inequality $\omega_p(g_{\mu}) < \omega_{p+1}(g_{\mu})$, and in \S \ref{ProofMain}, we prove the main theorem using the lemmas and a modified counting argument.

\subsection{Acknowledgements}
The author would like to thank Christos Mantoulidis for helpful discussions. The author was supported by NSF grant $23-603$.
\section{Min--Max Background and Notation} \label{mmBackground}
We recall the essential notation from the min-max theory of Almgren--Pitts \cite{almgren1962homotopy} and Marques--Neves \cite{MN2020CETSurvey}. Let $M$ be an $(n+1)$-dimensional manifold with boundary, $\p M$. Let $X \subseteq [0,1]^k$ denote a cubical subcomplex of the $k$-dimensional unit cube (cf. \cite[\S 2.2]{MN2020CETSurvey}), and let $\mathcal{Z}_n(M, \partial M; \Z_2)$ denote the set of relative flat $n$-chains mod $\Z_2$ - informally, such a chain is the boundary of a nice (Caccioppoli) open set where we ignore any component of $\partial \Omega$ contained inside of $\partial M$. In \cite{almgren1962homotopy}, Almgren showed that $\mathcal{Z}_n(M, \p M; \Z_2)$ is weakly homotopic to $\mathbb{R} \mathbb{P}^{\infty}$. We let $\overline{\lambda} \in H^1(\mathcal{Z}_n(M, \p M; \Z_2), \Z_2) \cong \Z_2$ denote the generator of this group, so that $\overline{\lambda}^p$ generates $H^p(\mathcal{Z}_n(M, \p M; \Z_2), \Z_2) \cong \Z_2$. For $T \in \mathcal{Z}_n(M, \p M; \Z_2)$, there is a natural associated varifold to $T$ and we let $||T||$ denote the corresponding Radon measure, with $||T||(U)$ denoting the integral of the measure over $U$. We further let $\mathbf{M}(T) = ||T||(M)$. 
\begin{definition}[\cite{marques2017existence}, Defn 4.1] \label{defi:sweepout}
	A map $\Phi : X\to \mathcal{Z}_{n}(M, \p M;\Z_{2})$ is a \textbf{$p$-sweepout} if it is continuous (with the standard flat norm topology on $\mathcal{Z}_n(M, \p M; \Z_2)$) and $\Phi^{*}(\overline\lambda^{p}) \neq 0 \in H^p(X)$. 
\end{definition}

\begin{definition}[\cite{marques2017existence} \S 3.3] \label{defi:no.concentration.of.mass}
	A map $\Phi : X \to \mathcal{Z}_n(M, \p M; \Z_2)$ is said to have \textbf{no concentration of mass} if
	\[ \lim_{r \to 0} \sup \{ \Vert \Phi(x) \Vert(B_r(p)) : x \in X, p \in M \} = 0. \]
\end{definition}

\begin{definition}[\cite{gromov2002isoperimetry}, \cite{guth2009minimax}] \label{defi:p-width}
	We define $\mathcal{P}_{p} = \mathcal{P}_{p}(M)$ to be the set of all $p$-sweepouts, out of any cubical subcomplex $X$, with no concentration of mass. The \textit{$p$-width} of $(M,g)$ is
	\[
	\omega_{p}(M,g) := \inf_{\Phi \in \mathcal{P}_{p}} \sup\{\mathbf{M}(\Phi(x)) : x \in \mathrm{domain}(\Phi)\}.
	\]
\end{definition}
\noindent We note that one can work with refined sweepouts to compute $\omega_p$, i.e. let 
\[ 
\mathcal{P}^{\mathbf{F}}_{p,m} : = \{\Phi \in \mathcal{P}_{p} : \dmn(\Phi) \subset I^m \text{ and } \Phi \text{ is } \mathbf{F}\text{-continuous} \} 
\]
then:
\begin{lemm}[Lemma 2.6, \cite{chodosh2023p}] \label{lemm:P.p.m}
If $m = 2p+1$, then
\[
\omega_{p}(M,g) = \inf_{\Phi \in \mathcal{P}^{\mathbf{F}}_{p,m}} \sup\{\mathbf{M}(\Phi(x)) : x \in \dmn(\Phi)\}.
\]
\end{lemm}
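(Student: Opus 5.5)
The plan is to prove the two inequalities between $\omega_p(M,g)$ and the refined infimum, which I denote $\omega_p^{\mathbf{F},m}$. One direction is immediate. Every $\Phi \in \mathcal{P}^{\mathbf{F}}_{p,m}$ is in particular flat-continuous and is a $p$-sweepout. Because it is $\mathbf{F}$-continuous on a compact domain, its image is $\mathbf{F}$-compact, so the associated varifolds form a compact family in the varifold topology. Compactness of that family gives no concentration of mass by a standard argument. Hence $\mathcal{P}^{\mathbf{F}}_{p,m} \subset \mathcal{P}_p$, and so $\omega_p(M,g) \le \omega_p^{\mathbf{F},m}$.

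For the reverse inequality, fix $\eps > 0$ and pick $\Phi \in \mathcal{P}_p$, defined on some cubical complex $X \subset [0,1]^k$, with $\sup_X \mathbf{M}(\Phi) \le \omega_p + \eps$. I would carry out three reductions.

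\textbf{Step 1 (reduce to dimension $p$).} The class $\Phi^*(\overline{\lambda}^p) \neq 0$ lies in $H^p(X;\Z_2)$. Only a $p$-dimensional piece of $X$ is needed to detect it, so I restrict $\Phi$ to the $p$-skeleton $X^{(p)}$ of a sufficiently fine subdivision of $X$. The restriction map $H^p(X) \to H^p(X^{(p)})$ is injective, so the restricted map is still a $p$-sweepout. Restriction preserves no concentration of mass and cannot increase the supremum of mass.

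\textbf{Step 2 (place inside $I^{2p+1}$).} A $p$-dimensional finite cubical complex embeds piecewise-linearly into $\R^{2p+1}$, by the general-position (Menger–Nöbeling) embedding. After subdividing, I would realize it as a cubical subcomplex of some refinement of $I^{2p+1}$. This is where the choice $m = 2p+1$ enters. Cohomology is a homotopy invariant, so the sweepout property transfers along this homeomorphism onto a cubical subcomplex $Y \subset I^{m}$.

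\textbf{Step 3 (upgrade to $\mathbf{F}$-continuity).} This is the main obstacle, and I would invoke Marques–Neves' discretization/interpolation machinery for the relative setting. First discretize $\Phi|_Y$ into a fine map on the vertices of a subdivision, with small flat oscillation. No concentration of mass makes the mass oscillation controllable. Then apply the Almgren extension to obtain an $\mathbf{M}$-continuous (hence $\mathbf{F}$-continuous) map $\Psi : Y \to \mathcal{Z}_n(M,\p M;\Z_2)$ satisfying three properties: $\Psi$ is homotopic to $\Phi|_Y$ in the flat topology, so $\Psi$ is still a $p$-sweepout; $\sup \mathbf{M}(\Psi) \le \sup \mathbf{M}(\Phi) + \eps$; and $\Psi$ is defined on a subcomplex of $I^m$. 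The delicate points are that the interpolation lemmas hold for relative cycles on manifolds with boundary (Li–Zhou's relative version) and that the homotopy preserves $\Phi^*\overline{\lambda}^p$. Granting these, $\omega_p^{\mathbf{F},m} \le \omega_p + 2\eps$. Letting $\eps \to 0$ completes the proof.
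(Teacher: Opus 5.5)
Your strategy is the standard one behind this lemma, which the paper does not reprove but quotes from Chodosh--Mantoulidis. The inequality $\omega_p(M,g)\le\inf_{\mathcal{P}^{\mathbf F}_{p,m}}\sup\mathbf{M}$ is immediate. With the definition used here, $\mathcal{P}^{\mathbf F}_{p,m}\subset\mathcal{P}_p$ holds by definition, so your compactness argument is correct but unnecessary. The reverse inequality comes from three ingredients: restrict to the $p$-skeleton, move the domain into $I^{2p+1}$, and apply discretization and Almgren extension (Li--Zhou's relative versions) to gain $\mathbf F$-continuity at an $\eps$ cost in mass. Your Steps 1 and 3 are fine. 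In Step 1, $H^p(X,X^{(p)};\Z_2)=0$ gives injectivity of the restriction map. In Step 3, flat homotopy preserves $\Phi^*\overline{\lambda}^p$.

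The gap is in Step 2, the only place where $m=2p+1$ is used. General position gives an injective map $L\colon X^{(p)}\to\R^{2p+1}$ that is affine on cells. However, its image $P$ is made of parallelepipeds in generic directions, and in general it is not a union of cells of the subdivision of $I^{2p+1}$ into cubes of side $3^{-j}$, for any $j$. Subdividing $X^{(p)}$ refines the cell structure of the domain but never changes the set $P$. So ``after subdividing, realize it as a cubical subcomplex'' does not follow. A homeomorphism onto a subcomplex of a cubical grid in $\R^{2p+1}$ is a genuinely combinatorial statement that needs its own proof or a citation.

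You can avoid needing a homeomorphism at all. After rescaling, $P\subset\mathrm{int}(I^{2p+1})$ is a compact polyhedron, hence a Euclidean neighborhood retract. So there is an open $U\supset P$ and a retraction $r\colon U\to P$. Choose $j$ so large that the union $Y$ of all closed cubes of side $3^{-j}$ meeting $P$ lies in $U$. Then $Y$ is a cubical subcomplex of $I^{2p+1}$, and the map $\Phi|_{X^{(p)}}\circ L^{-1}\circ r|_Y$ has the required properties:
\begin{itemize}
\item it is flat-continuous;
\item its image lies inside $\Phi(X)$, so there is no concentration of mass and $\sup\mathbf{M}$ does not increase;
\item it is still a $p$-sweepout, because $r|_Y$ is the identity on $P\subset Y$, so $(r|_Y)^*$ is injective on $H^p$.
\end{itemize}
That $\dim Y=2p+1$ rather than $p$ is harmless, since only $\dmn(\Phi)\subset I^m$ is required. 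With this replacement, your Step 3 finishes the proof.
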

\subsection{Allen--Cahn theory} \label{ACSection}
We remark that there is another way to compute the p-widths using the Allen--Cahn theory. In the below, we consider $(M^2, g)$ a smooth surface with boundary. The Allen--Cahn energy (with the sine gordon potential) is given by 
\begin{align*}
u & \in H^1(M)  \\
E_{\eps}(u) &= \int_M \frac{\eps}{2} |\nabla^g u|^2 + \frac{W(u)}{\eps} \\
W(u) &:= \frac{1 + \cos(\pi u)}{\pi^2}
\end{align*}
and critical points satisfy the equation 
\begin{align} \label{ACeqn}
\eps^2 \Delta_g u &= W'(u) \\ \label{neumann}
\p_{\nu} u \Big|_{\p M} &= 0
\end{align}
%
%
\indent Let $\tilde{P}_p$ denote the space of $\Z / 2 \Z$-equivariant maps $h: \tilde{X} \to H^1(M) \backslash \{0\}$, where $\tilde{X}$ is the double cover of some $p$-dimensional subcomplex of $I^m$. We define
\[
\omega_{p,\eps} = \inf_{h \in \tilde{P}_p} \max_{x \in \text{Dom}(h)} E_{\eps}(h(x))
\]
We then have the following of equality of widths in the limit as $\eps \to 0$
\begin{proposition}[\cite{dey2022comparison}, \cite{gaspar2018allen}] \label{widthequality}
Let $h_0$ be the energy of the one dimensional heteroclinic solution to \eqref{ACeqn} on $\R$. Then 
\[
\lim_{\eps \to 0} h_0^{-1} \omega_{p,\eps}(M^2, g) = \omega_p(M^2, g)
\]
\end{proposition}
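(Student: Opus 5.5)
The statement splits into two inequalities, $\limsup_{\eps \to 0} h_0^{-1}\omega_{p,\eps} \le \omega_p$ and $\liminf_{\eps\to 0} h_0^{-1}\omega_{p,\eps} \ge \omega_p$. I would prove each by converting sweepouts of one kind into sweepouts of the other, following Gaspar--Guaraco for the upper bound and Dey for the lower bound. Throughout I check that the Neumann condition \eqref{neumann} and the relative cycle space $\mathcal{Z}_1(M,\p M;\Z_2)$ impose no extra constraint. The reason is that the heteroclinic profiles and level-set constructions below never need to vanish or match anything on $\p M$.

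\emph{Upper bound.} By Lemma \ref{lemm:P.p.m}, fix an $\mathbf{F}$-continuous $p$-sweepout $\Phi$ with no concentration of mass and $\sup \mathbf{M}(\Phi) \le \omega_p + \delta$. First I approximate $\Phi$, at a cost of $\delta$ in mass, by a map whose values are boundaries $\p\Omega(x)$ (relative to $\p M$) of sets with piecewise smooth boundary. The pair $\{\Omega(x), M\setminus\Omega(x)\}$ is exactly the data of a point in the double cover $\tilde X$. To each lift I assign
\[
u_x = q\bigl(d^{\pm}_{\Omega(x)}/\eps\bigr),
\]
where $q$ is the one-dimensional heteroclinic, truncated to be exactly $\pm 1$ at distance $\gg \eps$, and $d^{\pm}$ is the signed distance to $\p\Omega(x)\cap \mathrm{int}\,M$. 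Swapping $\Omega$ with its complement sends $u_x$ to $-u_x$, so the map is $\Z/2$-equivariant. It also avoids $0$ and is continuous into $H^1$.

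For the energy, the coarea formula and the area of tubular neighbourhoods give
\[
E_\eps(u_x) \le h_0\,\mathbf{M}(\Phi(x)) + o_\eps(1),
\]
uniformly in $x$. Uniformity comes from no concentration of mass together with compactness of $X$. The boundary $\p M$ only truncates the tubes, which can only decrease the energy. Hence $h_0^{-1}\omega_{p,\eps} \le \omega_p + 2\delta + o(1)$.

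\emph{Lower bound.} Take $h \in \tilde P_p$ with $\max E_\eps\circ h \le \omega_{p,\eps} + \delta$. Let $w(s) = \int_0^s \sqrt{2W}$, normalized so that $w(1)-w(-1) = h_0$. By the Modica--Mortola inequality and coarea,
\[
E_\eps(u) \ge \int_{-1}^{1} \mathbf{M}\bigl(\p\{u > t\}\bigr)\, w'(t)\,dt .
\]
So on average over $t$ the level sets have mass at most $h_0^{-1}E_\eps(u)$, and in particular some $t$ has this property. The map $u \mapsto \p\{u>0\}$ sends $H^1\setminus\{0\}$, modulo $\pm$, to $\mathcal{Z}_1(M,\p M;\Z_2)$. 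It is the standard map $\mathbb{RP}^\infty \to \mathcal{Z}_1$, and it pulls $\overline\lambda$ back to the generator. Consequently the induced map on $X$ is a $p$-sweepout, provided it can be made continuous.

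\emph{Main obstacle.} The hard step is continuity. Level sets of $H^1$ functions do not vary continuously in the flat norm, and the good level $t$ depends on $x$. I would follow Dey's discretization. Take a fine cubical refinement of $X$ and choose good levels at the vertices. Then interpolate across cells using the isoperimetric lemma of Almgren--Pitts, which applies on manifolds with boundary for relative cycles. This produces a continuous map with no concentration of mass and
\[
\sup\mathbf{M} \le h_0^{-1}(\omega_{p,\eps}+\delta) + o(1).
\]
Controlling the interpolation error requires uniform $H^1$ continuity of $h$ on the compact set $\tilde X$, which converts into flat-norm closeness of the level sets at nearby vertices. Letting the mesh and $\delta$ tend to $0$ gives $\omega_p \le \liminf_{\eps \to 0} h_0^{-1}\omega_{p,\eps}$, and combined with the upper bound this proves the proposition.
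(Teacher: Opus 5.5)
The paper gives no argument for this proposition. It quotes Gaspar--Guaraco and Dey and defers the boundary version to Chodosh--Cholsaipant, so your sketch has to stand on its own. It has a genuine gap in the step you treat as routine: the upper bound.

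\textbf{The upper bound fails as written.} The map $x\mapsto u_x=q(d^{\pm}_{\Omega(x)}/\eps)$ is \emph{not} continuous into $H^1$, even for smooth, mass-continuous families. Let a small component be born: $\Omega_r=\Omega_0\cup B_r(y_0)$, with $B_r(y_0)$ far from $\Omega_0$ and $r\downarrow 0$.
\begin{itemize}
\item On the cycle side, $\mathbf{M}(\partial\Omega_r-\partial\Omega_0)=2\pi r\to 0$.
\item On the function side, near $y_0$ the $u_r$ converge to the bump $q(\pm|y-y_0|/\eps)$ rather than to the constant value of $u_0$. Hence $\int|\nabla(u_r-u_0)|^2\to 2\pi\int_0^\infty t\,q'(t)^2\,dt>0$, independently of $r$ and $\eps$.
\end{itemize}
This already breaks the simplest sweepout by level sets of a Morse function.

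The energy bound is not uniform either. By coarea, $E_\eps(u_x)$ averages the lengths of the parallel curves $\{d^\pm=s\}$ for $|s|\lesssim\eps$. Neither $\mathbf{M}(\Phi(x))$ nor no concentration of mass controls these lengths. For example, $N$ well-separated circles of radius $r\ll\eps$ have mass $2\pi N r$ but energy of order $N\eps$. There is also no available continuous smoothing of a sweepout into piecewise smooth boundaries with uniformly controlled geometry.

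The inequality $\limsup_{\eps\to 0}h_0^{-1}\omega_{p,\eps}\le\omega_p$ is precisely the content of Dey's paper. I believe your attributions are swapped: the functions-to-cycles direction, valid for general double-well potentials, is the Gaspar--Guaraco one. Dey discretizes the sweepout, builds profiles at the vertices, and interpolates in $H^1$ with energy control across cells. That construction relies on the specific periodic sine-Gordon potential. Your argument never uses the form of $W$, which is a sign that this step is too naive.

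\textbf{The lower bound also needs more than you indicate.} Almgren--Pitts interpolation controls mass only when neighbouring vertex cycles are close in \emph{mass}. Level sets taken at different levels $t_v\neq t_{v'}$ are essentially disjoint, so $\mathbf{M}(\partial\{u_v>t_v\}-\partial\{u_{v'}>t_{v'}\})$ is roughly the sum of the two masses.

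Uniform $H^1$-continuity of $h$ gives you only flat closeness. Even that holds only if you keep $t_v\in[-1+\delta,1-\delta]$ and take $\eps\ll\delta$, so that $|\{|u|<1-\delta\}|=O(\eps)$. Interpolating flat-close data yields a flat-continuous map with no useful mass bound: intermediate cycles can carry up to the sum of the neighbouring masses. So the estimate $\sup\mathbf{M}\le h_0^{-1}(\omega_{p,\eps}+\delta)+o(1)$ does not follow from the scheme as described.
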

In fact, proposition \ref{widthequality} holds for any homotopy class $\Pi$ - we refer the reader to \cite{chodosh2025p} for more details. Chodosh--Cholsaipant also prove the following regularity theory for surfaces with totally geodesic boundary. 
\begin{theorem}[Thm 1.3 \cite{chodosh2025p}] \label{ChoChoRegularity}
For $(M^2, g)$ a riemannian surface with geodesic boundary and $p = 1,2, \dots,$, there's a finite set of billiard trajectories, $\{\gamma_{p,i}\}_{i = 1}^{N_p}$ (possibly with repetition) such that 
\[
\omega_p(M^2, g) = \sum_{i = 1}^{N_p} \ell(\gamma_{p,i})
\]
\end{theorem}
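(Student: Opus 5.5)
This statement is Theorem 1.3 of Chodosh--Cholsaipant, restated here as background. The intended use is to cite it from \cite{chodosh2025p} rather than reprove it. Still, here is how I would structure a proof, adapting the Chodosh--Mantoulidis Allen--Cahn strategy to the Neumann setting.

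\textbf{Step 1: Allen--Cahn critical points.} By Proposition \ref{widthequality}, $\omega_p(M,g) = \lim_{\eps\to 0} h_0^{-1}\omega_{p,\eps}$. For each small $\eps$, Lusternik--Schnirelmann/min-max theory for the $\Z/2$-equivariant family $\tilde P_p$ yields a critical point $u_\eps$ with the following properties:
\begin{itemize}
\item it solves \eqref{ACeqn} with the Neumann condition \eqref{neumann};
\item its energy is $E_\eps(u_\eps)=\omega_{p,\eps}$;
\item its Morse index is at most $p$.
\end{itemize}
Uniform energy and index bounds then give, after Hutchinson--Tonegawa, subsequential varifold convergence $h_0^{-1}E_\eps$-measures $\to V$, where $V$ is a stationary integral $1$-varifold with mass $\omega_p$. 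Stationarity is taken in the free boundary sense: variations are tangent to $\partial M$.

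\textbf{Step 2: Reduce to the closed case by reflection.} Because $\partial M$ is totally geodesic, the double $\widetilde M = M\cup_{\partial M} M$ carries a $C^{1,1}$ (indeed smooth across geodesic boundary, up to the reflection symmetry) metric. Moreover, Neumann solutions reflect evenly to solutions of \eqref{ACeqn} on $\widetilde M$. The reflected $\tilde u_\eps$ has energy $2\omega_{p,\eps}$ and Morse index bounded, by a symmetric/antisymmetric eigenfunction splitting, in terms of $p$. On the closed surface $\widetilde M$, I would apply the Chodosh--Mantoulidis analysis of finite-index Sine--Gordon solutions on surfaces. This consists of curvature estimates away from finitely many points, the classification of the blowup limits, and the exclusion of multiplicity/junction points via the Sine--Gordon structure, which prevents $Y$-type and transverse crossings from carrying energy unbalanced. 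The conclusion is that the limit varifold $\widetilde V$ is a finite union of closed geodesics with integer multiplicity.

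\textbf{Step 3: Descend to $M$.} $\widetilde V$ is invariant under the reflection. Restricting each closed geodesic of $\widetilde M$ to $M$ and folding gives geodesic segments that meet $\partial M$ with equal angles of incidence and reflection, i.e.\ closed billiard trajectories $\gamma_{p,i}$. Geodesics lying inside $\partial M$ are themselves admissible, and multiplicities account for repetition. The mass identity $\|V\|(M)=\omega_p$ then yields the stated sum.

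\textbf{Main obstacle.} The hard step is Step 2: the index bound and regularity near $\partial M$. In particular, one must show the equivariant min-max produces controlled index, and rule out energy concentration at boundary points where the limit might meet $\partial M$ tangentially or at corners. I would first try the reflection route, checking that the doubled metric is regular enough for Chodosh--Mantoulidis' curvature estimates. If that fails, I would fall back on the boundary-adapted estimates of \cite{chodosh2025p}.
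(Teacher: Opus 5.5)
The paper does not prove this statement. It imports it from Chodosh--Cholsaipant, and the only related ingredient it supplies is Remark~\ref{doublingRemark}, used in the converse direction: billiards on $M$ lift to geodesics on the double, which gives Proposition~\ref{propbilliardrigid}. Your decision to cite therefore matches the paper.

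Your sketch is a separate, self-contained route: reflect the Neumann solutions evenly to the closed double and run Chodosh--Mantoulidis there. Much of it is sound. The obstacle you flag in Step 2 is actually mild:
\begin{itemize}
\item The even/odd splitting gives $\mathrm{ind}(\tilde u_\eps)\le 2p$. Even modes see the Neumann index ($\le p$), and odd modes see the Dirichlet index, which is no larger.
\item A geodesic of the double that is tangent to $\partial M$ is a component of $\partial M$.
\end{itemize}
One correction: the doubled metric is not smooth in general. In Fermi coordinates $dt^2+G(s,t)^2ds^2$ with $G_t(s,0)=0$, the even extension is $C^{2,1}$, but its third normal derivative jumps unless $\partial_\nu K\equiv 0$ on $\partial M$. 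This is why Remark~\ref{doublingRemark} assumes smoothness. For the hemi-ellipsoids that assumption holds, since their double is the ellipsoid.

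The genuine gap is at $\partial M$, in Steps 1 and 3. Hutchinson--Tonegawa is an interior result, so it does not give integrality of $V$ up to $\partial M$; the double only gives integrality of $\widetilde V$. Since $\widetilde V=V+R_\# V$ and $R$ fixes $\partial M$ pointwise, a component $C\subset\partial M$ carried by $\widetilde V$ with multiplicity $m$ contributes only $\tfrac m2\,\ell(C)$ to $\omega_p=\|V\|(M)$. Reflection invariance and the Chodosh--Mantoulidis structure theorem do not exclude odd $m$: the equator with multiplicity one is a perfectly good $R$-invariant closed geodesic on the round $S^2$. With $m$ odd, $\omega_p$ is not a sum of billiard lengths. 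This matters for the paper: a value such as $\pi+\mu/2=\tfrac12\ell(\gamma_3^+)$ would violate Lemma~\ref{lem:perturbedMetrics}(2) and break the count in \S\ref{ProofMain}.

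The missing idea is a parity argument from the evenness of $\tilde u_\eps$:
\begin{itemize}
\item The limit phase $\tilde u_0\in\{\pm1\}$ is $R$-invariant, so $\mathcal H^1(\partial^*\{\tilde u_0=1\}\cap\partial M)=0$. At a.e.\ such point the approximate tangent would be $T\partial M$, and a half-plane bounded by the fixed line is not reflection-invariant.
\item Hutchinson--Tonegawa's odd/even density dichotomy then forces $m$ to be even, so $C$ contributes $\tfrac m2$ honest copies of a closed geodesic.
\end{itemize}
Relatedly, for the interior part you should take $V\llcorner\mathrm{int}M=\widetilde V\llcorner\mathrm{int}M$ and group its arcs into billiard trajectories, rather than ``fold'' each closed geodesic. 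Folding both $\sigma$ and $R\sigma$ double counts. An $R$-invariant $\sigma$ projects two-to-one onto a billiard path of length $\ell(\sigma)/2$.
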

\noindent We note that in addition to the homotopy class corresponding to the $p$-widths, theorem \ref{ChoChoRegularity} holds for any homotopy class, $\Pi$. \nl 
\indent We recall that on a smooth surface with geodesic boundary, a \textbf{billiard trajectory, $\gamma$}, is a either a closed geodesic, a free boundary geodesic, or a finite union of geodesic segments, $\gamma_1, \dots, \gamma_k$, such that the endpoint of $\gamma_i$, $p$, agrees with the starting point of $\gamma_{i+1}$ for $1\leq i \leq k-1$, and $\gamma_i'(p)$ is the reflection of $\gamma_{i+1}'(p)$ across $T \partial M$.
\begin{remark} \label{doublingRemark}
Let $(M^2, g)$ be a manifold with geodesic boundary, so that the double 
\[
\tilde{M} = M \sqcup_{\partial M} M = \{(x, \eta) \; | \; x \in M, \; \eta \in \{-1,1\}\} / ((x, -1) \sim (x, + 1)) \; \forall x \in \partial M)
\]
is a smooth topological manifold. Let $\Pi(x,\eta) = x$, and suppose that the pull back metric, $\tilde{g}\Big|(x, \eta) = \Pi^*(g) \Big|_{x}$ is smooth. Then for $\gamma$, a closed billiard trajectory on $M$, $\Pi^{-1}(\gamma)$ is a $\Z/2\Z$-invariant (with respect to the map $R: \tilde{M} \to \tilde{M}$ action given by $R(x, \pm 1) = (x \mp 1)$) union of closed immersed geodesics on $\tilde{M}$.
\end{remark}
\section{Fundamental Lemmas} \label{lemmasSection}

\subsection{A good metric on $(S^2)^+$}
We recall some relevant theorems from \cite[\S 6]{chodosh2023p}: Consider the ellipsoids
\begin{equation} \label{ellipsoidEq}
E(a_1, a_2, a_3) := \{ (x_1, x_2, x_3) \in \RR^3 : a_1 x_1^2 + a_2 x_2^2 + a_3 x_3^2 = 1 \} \subset \RR^3 
\end{equation}
and the three geodesics
\[ \gamma_i(a_1, a_2, a_3) := E(a_1, a_2, a_3) \cap \{ x_i = 0 \}, \; i = 1, 2, 3 \]
on them. It is shown in \cite[Theorems IX 3.3, 4.1]{Morse:calculus.variations} that for every $\Lambda > 2\pi$, if $a_1 < a_2 < a_3$ are sufficiently close to $1$ (depending on $\Lambda$), then every closed connected immersed geodesic $\gamma \subset E(a_1, a_2, a_3)$ (coverings allowed) satisfies:
\begin{equation} \label{eq:geodesic.network.ellipsoid.geodesic.genericity}
	\gamma \text{ has no nontrivial normal Jacobi fields if } \length(\gamma) < 2\Lambda,
\end{equation} 
and
\begin{equation} \label{eq:geodesic.network.ellipsoid.geodesic.uniqueness}
	\gamma \text{ is an iterate of one of } \gamma_i(a_1, a_2, a_3), \; i = 1, 2, 3, \text{ if } \length(\gamma) < 2\Lambda.
\end{equation}
We recall the following theorem:
\begin{theorem}[Thm 6.1, \cite{chodosh2023p}]\label{theo:good.metric.final}
	Let $\Lambda > 0$ and $U$ be any neighborhood, in the $C^\infty$ topology, of the standard metric $g_{std} \in \met(S^2)$. There is a $\mu_0 = \mu_0(\Lambda, U) > 0$, so that for all $\mu \in (0, \mu_0)$, there exists $g_\mu \in U$ with all these properties:
	\begin{enumerate}
		\item There are three simple closed geodesics $\gamma_1, \gamma_2, \gamma_3 \subset (S^2, g_\mu)$ so that $\length_{g_\mu}(\gamma_i) = \pi + (i-1) \mu$.
		\item If a closed connected geodesic in $(S^2, g_\mu)$ has $\length_{g_\mu} < \Lambda$, then it is an iterate of $\gamma_i$, $i = 1$, $2$, $3$.
		\item There are no not-everywhere-tangential stationary varifold Jacobi fields along any $S \in \cS^{\Lambda}(g_\mu)$.
	\end{enumerate}
	Moreover, $g_\mu \to g_{std}$ as $\mu \to 0$ in the $C^\infty$ topology.
\end{theorem}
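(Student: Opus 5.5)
This is Theorem 6.1 of \cite{chodosh2023p}, recalled verbatim. I would follow their construction. The natural starting point is the family of ellipsoids $E(a_1,a_2,a_3)$ of \eqref{ellipsoidEq}, rescaled so that the metric lies in $U$. The first step is to pick $a_1<a_2<a_3$ close to $1$, depending on $\Lambda$, so that \eqref{eq:geodesic.network.ellipsoid.geodesic.genericity} and \eqref{eq:geodesic.network.ellipsoid.geodesic.uniqueness} hold, by Morse's theorems. The pull-back of the induced metric is then near $g_{std}$.

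The second step is to tune the parameters. The lengths of the three principal geodesics $\gamma_i(a_1,a_2,a_3)$ are smooth functions of $(a_1,a_2,a_3)$, with nondegenerate differential near $(1,1,1)$: to first order, $\length(\gamma_i)$ depends on the two $a_j$ with $j\neq i$. By the inverse function theorem, for small $\mu$ I can solve for $a=a(\mu)$ so that the three lengths are in arithmetic progression with common difference $\mu$. A global rescaling then gives the prescribed values $\pi+(i-1)\mu$. This scaling is harmless, since the relevant convention is just a normalization, up to a factor of $2$ relative to the $2\pi$ of great circles. This yields (1). Property (2) follows from \eqref{eq:geodesic.network.ellipsoid.geodesic.uniqueness}, with $\Lambda$ adjusted for the scaling. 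As $\mu\to0$ we have $a(\mu)\to(1,1,1)$, so $g_\mu\to g_{std}$ in $C^\infty$.

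The main obstacle is (3): ruling out non-tangential stationary varifold Jacobi fields along every stationary geodesic network $S\in\cS^\Lambda(g_\mu)$ of mass below $\Lambda$. My plan is the following. First, use (2) together with the classification of stationary integral varifolds on surfaces. On a surface such a varifold is a geodesic network, and I would show that for metrics this close to round, with this few short geodesics, every network of mass less than $\Lambda$ is a union of iterates of $\gamma_1,\gamma_2,\gamma_3$ with multiplicity. Possible junction networks must be excluded. Here I would argue by compactness as $\mu\to0$: a network with junctions would converge to a stationary network on the round sphere whose segments are great-circle arcs. A first-variation and angle argument at the junctions, using the fact that the nearby principal geodesics are nondegenerate and isolated, should rule these out. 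Second, a varifold Jacobi field along a union of closed geodesics decomposes into normal Jacobi fields on each component. Those vanish by \eqref{eq:geodesic.network.ellipsoid.geodesic.genericity}, since each length is below $2\Lambda$. The junction exclusion is the delicate step. If it fails directly, I would add a small generic conformal perturbation supported away from the $\gamma_i$, invoking bumpy-metric genericity for geodesic networks, and then rechoose $a(\mu)$ to restore the exact lengths in (1).
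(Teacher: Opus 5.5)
The paper gives no proof of this statement; it is quoted from \cite[Thm 6.1]{chodosh2023p}. Your argument therefore has to stand on its own, and it has a genuine gap at (3).

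Your main plan for (3) is to show that for small $\mu$ every $S\in\cS^\Lambda(g_\mu)$ is a union of iterates of $\gamma_1,\gamma_2,\gamma_3$, excluding junctions by letting $\mu\to 0$ and using an angle argument. This cannot work, because the claim is false. On the round sphere, three half great circles joining two antipodal points and meeting there at $120^\circ$ form a stationary theta-graph with $\#\sing S+\Vert S\Vert=2+3\pi$; the spherical tetrahedral net is another example. These networks come in compact $SO(3)$-families and should persist on every metric close enough to round, e.g.\ via a Lyapunov--Schmidt reduction over that family. So $\cS^\Lambda(g_\mu)$ contains networks with triple junctions once $\Lambda$ exceeds roughly $3\pi+2$, and in the applications $\Lambda$ is huge. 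In any case your compactness argument can only converge to a stationary network on the round sphere, and such networks with junctions exist there. No contradiction is available. Moreover \eqref{eq:geodesic.network.ellipsoid.geodesic.genericity} concerns only closed geodesics and says nothing about Jacobi fields along junction networks.

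What (3) actually requires is a transversality (bumpy-metric) theorem for stationary geodesic networks with junctions, in the varifold-Jacobi-field sense of \cite[\S 5]{chodosh2023p} (cf.\ Staffa's bumpy metric theorem for nets). It must be carried out within perturbations that keep $\gamma_1,\gamma_2,\gamma_3$ geodesic with unchanged lengths, e.g.\ conformal factors vanishing near $\gamma_1\cup\gamma_2\cup\gamma_3$. You would then have to show these restricted perturbations still destroy every not-everywhere-tangential Jacobi field:
\begin{itemize}
\item networks with an edge not contained in $\bigcup\gamma_i$ are handled by perturbing near that edge;
\item networks carried by $\bigcup\gamma_i$ are where \eqref{eq:geodesic.network.ellipsoid.geodesic.genericity} enters, together with an analysis at the crossing points.
\end{itemize}
You must also re-prove (2) for the perturbed metric, using compactness plus isolatedness of the nondegenerate iterates; this is the role of the slack $2\Lambda$ in Morse's properties. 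Your fallback sentence names this step but supplies none of it. Re-choosing $a(\mu)$ afterwards is unnecessary for perturbations supported away from the $\gamma_i$, and doing so would force you to prove that (3) is an open condition.

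Separately, the global rescaling in your second step is not harmless. A metric $C^2$-close to the unit round metric has no closed geodesic of length near $\pi$. Scaling the metric by about $1/4$ takes $g_\mu$ out of $U$ and breaks $g_\mu\to g_{std}$. The ``$\pi+(i-1)\mu$'' in the statement is a transcription slip for $2\pi+(i-1)\mu$. Your inverse-function-theorem step produces exactly those lengths directly, with no rescaling.
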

Note in particular, that $g_{\mu}$ is the induced metric from $E(a_1(\mu), a_2(\mu), a_3(\mu))$ for smooth functions $a_i(\mu)$, and hence these metrics have reflectional symmetry about the three standard axes. We now consider the hemi-ellipsoids
\begin{equation} \label{hemiellipsoidEq}
E^+(a_1(\mu), a_2(\mu), a_3(\mu)) := \{ (x_1, x_2, x_3) \in \RR^3 : a_1 x_1^2 + a_2 x_2^2 + a_3 x_3^2 = 1, x_3 \geq 0\} \subset \RR^3 
\end{equation}
and the analogous curves
\[ \gamma_i^+(a_1, a_2, a_3) := E^+(a_1, a_2, a_3) \cap \{ x_i = 0 \}, \; i = 1, 2, 3 \]
where by convention
\[
\ell(\gamma_i^+(a_1(\mu), a_2(\mu), a_3(\mu)) = \begin{cases} 
\pi + (i-1) \mu & i \in \{1,2\} \\
2 \pi + \mu & i = 3
\end{cases}
\]
Note that $\gamma_1, \gamma_2$ are now free boundary geodesics while $\gamma_3$ is a closed geodesic. For $E^+$ as above, we label the induced metric on the hemisphere as $(\stplus, g_{\mu})$. By remark \ref{doublingRemark} and theorem \ref{theo:good.metric.final}, we have the following proposition:
\begin{proposition} \label{propbilliardrigid}
For the same parameters, $\Lambda, \mu_0$ as in theorem \ref{theo:good.metric.final}, any closed billiard trajectory of length $< \Lambda/2$ on $(\stplus, g_{\mu})$ is one of $\{\gamma_i\}$.
\end{proposition}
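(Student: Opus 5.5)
The plan is to pass to the double and invoke the rigidity of closed geodesics on the ellipsoid. Since $E^+(a_1(\mu),a_2(\mu),a_3(\mu))$ is the upper half of the ellipsoid $E(a_1(\mu),a_2(\mu),a_3(\mu))$, and the latter is symmetric under $x_3 \mapsto -x_3$, the boundary $\{x_3=0\}\cap E$ is the fixed set of an isometric reflection. It is therefore totally geodesic, so $(\stplus, g_\mu)$ has geodesic boundary. Moreover, the double $\tilde M$ of Remark \ref{doublingRemark} is isometric to $(S^2, g_\mu)$ itself: we identify $(x,+1)$ with $x$ and $(x,-1)$ with its reflection $(x_1,x_2,-x_3)$. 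The doubled metric is then smooth, being the restriction of the smooth ellipsoid metric, so Remark \ref{doublingRemark} applies. Under this identification the involution $R$ is the reflection $x_3 \mapsto -x_3$.

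Let $\gamma$ be a closed billiard trajectory on $(\stplus, g_\mu)$ with $\ell(\gamma) < \Lambda/2$. By Remark \ref{doublingRemark}, $\Pi^{-1}(\gamma)$ is an $R$-invariant union of closed immersed geodesics on $(S^2,g_\mu)$. Its total length is $2\ell(\gamma) < \Lambda$, because each segment of $\gamma$ lifts to two copies and reflection at the boundary glues the lifts smoothly. Each connected component is then a closed connected geodesic of length $<\Lambda$. By Theorem \ref{theo:good.metric.final}(2), or equivalently \eqref{eq:geodesic.network.ellipsoid.geodesic.uniqueness}, each component is an iterate of some $\gamma_i$, $i=1,2,3$, i.e.\ a great ellipse $E\cap\{x_i=0\}$.

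It remains to project back. Since $\gamma = \Pi(\Pi^{-1}(\gamma))$, the trajectory $\gamma$ is the image under $\Pi$ of (iterates of) the curves $E\cap\{x_i=0\}$. For $i=1,2$, the curve $E\cap \{x_i = 0\}$ meets $\{x_3=0\}$ orthogonally and is $R$-invariant. Its projection is the free boundary geodesic $\gamma_i^+$, traversed out and back, which is the closed billiard trajectory associated to the free boundary geodesic. For $i=3$, the curve $E \cap \{x_3=0\}$ is the boundary itself, and it projects to the closed geodesic $\gamma_3^+ = \partial \stplus$. Since the geodesic segments of $\gamma$ are connected and lie on a single lifted component, $\gamma$ is (an iterate of) one of the $\gamma_i^+$, as claimed.

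The main obstacle is bookkeeping rather than analysis. One must check that the lift of a billiard trajectory is genuinely a smooth closed geodesic, including at reflection points and at possible corners where a trajectory hits the boundary tangentially. One must also check that the length doubles exactly, which is why the hypothesis uses $\Lambda/2$. The first point is handled by the reflection law in the definition: an incoming direction reflected across $T\partial M$ continues smoothly into the other sheet of $\tilde M$. A trajectory tangent to the boundary is itself a geodesic tangent to a geodesic, hence it coincides with the boundary $\gamma_3^+$.
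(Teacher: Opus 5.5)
Your proposal is correct and is the same argument the paper gives in a single line: double $(\stplus, g_\mu)$ across its totally geodesic boundary via Remark~\ref{doublingRemark} to recover the full ellipsoid, then apply the rigidity of Theorem~\ref{theo:good.metric.final}(2) to the lifted geodesics, whose total length $2\ell(\gamma)$ is below $\Lambda$. You supply the details the paper leaves implicit (the reflection symmetry, the length doubling, projecting iterates of $\gamma_1,\gamma_2,\gamma_3$ back to the $\gamma_i^+$, and the tangential-contact case), and your reading of the conclusion as holding up to iterates is the right one.
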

\subsection{Upper Bound}
We recreate the analogous upper bound on the p-widths, by using a restricted class of sweepouts on $\stplus$. First, let $D(d) = \begin{pmatrix}
    (d+2) \\ 2
\end{pmatrix}$, and let $f: \Z^+ \to \Z^+$ be such that 
\[
f(p) = d \iff p \in \{D(d-1), \dots, D(d)-1\}
\]
\begin{lemma} \label{lem:perturbedMetrics}
For $p \in \N^*$, there exists $\mu_1 > 0$ and an open neighborhood $U$ of the unit round metric on $\stplus$ depending on $p$ such that for all $\mu \in (0, \mu_1)$, $g_{\mu} \in U$ and 
\begin{enumerate}
    \item \label{enum:upperBound} $\omega_p(\stplus, g_{\mu}) \leq \pi f(p) + 1$
    \item \label{enum:containment} For any $\mathcal{F}$-homotopy class $\Pi \subseteq \mathcal{P}^{\mathbf{F}}_{p,m}$ (i.e. $p$-sweepouts coming from domains of dimension at most $m$)
    \begin{align*}
    \mathbf{L}_{AP}(\Pi, g_{\mu}) &\in  \Big( \{ n_1 \cdot \pi + n_2 \cdot (\pi + \mu) + n_3 \cdot (2\pi + \mu) \; : \; (n_1,n_2,n_3) \in \N^3 \} \backslash \{0\}\Big) \\
    & \qquad \cup [\pi f(p) + 2, \infty)
    \end{align*}
    %
\end{enumerate}
\end{lemma}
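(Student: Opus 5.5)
Part (1) will come from an explicit polynomial sweepout on the round hemisphere, and part (2) from the min--max regularity theory combined with the rigidity of short billiard trajectories for $g_\mu$.

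For (1), let $V_d$ be the space of polynomials $f(x,y)$ of degree at most $d$, restricted to $\stplus$. Since the map $(x,y,z)\mapsto(x,y)$ identifies $\stplus$ with the closed unit disk, restriction is injective and $\dim V_d = D(d)$. We define $\Phi : \P(V_d) \cong \R\P^{D(d)-1} \to \mathcal{Z}_1(\stplus, \p\stplus;\Z_2)$ by $[f]\mapsto \p\{f<0\}$, taken relative to the boundary. The standard argument of \cite{chodosh2023p} (via Guth/Aiex) shows that this map is continuous in the flat topology, has no concentration of mass, and pulls $\overline\lambda$ back to the generator of $H^1(\R\P^{D(d)-1})$. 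It is therefore a $p$-sweepout for every $p\le D(d)-1$, in particular for $p\in\{D(d-1),\dots,D(d)-1\}$ with $d=f(p)$.

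To bound the mass, we bound the length of $\{f=0\}$ on the round hemisphere. The great-circle lemma (Crofton formula) gives the bound. On $S^2$, $\{f=0\}$ is cut out by a polynomial of degree $d$ in $\R^3$, so a generic great circle $\{ax+by+cz=0\}$ meets it in at most $2d$ points by Bezout. Crofton then gives $\length(\{f=0\}\cap S^2)\le 2\pi d$. The level set is symmetric under $z\mapsto -z$, so its length in $\stplus$ is at most $\pi d$. Hence $\sup \mathbf{M}\le \pi f(p)$ for $g_{std}$. Since $g_\mu\to g_{std}$ smoothly, the masses computed with $g_\mu$ are at most $(1+o(1))\pi f(p)$, uniformly over the family. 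Choosing $\mu_1$ and $U$ small therefore gives $\omega_p(g_\mu)\le \pi f(p)+1$.

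For (2), fix an $\mathcal{F}$-homotopy class $\Pi$ and apply Theorem \ref{ChoChoRegularity} in the form valid for arbitrary homotopy classes (via Proposition \ref{widthequality} and the Chodosh--Cholsaipant regularity). This writes $\mathbf{L}_{AP}(\Pi,g_\mu)=\sum_i \ell(\gamma_i)$ with each $\gamma_i$ a closed billiard trajectory. The metric $g_\mu$ has geodesic boundary and a smooth double, namely the ellipsoid, by the reflection symmetry across $\{x_3=0\}$, so Remark \ref{doublingRemark} applies.

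If $\mathbf{L}_{AP}\ge \pi f(p)+2$ there is nothing to prove. Otherwise every $\gamma_i$ has length less than $\pi f(p)+2$. We now apply Theorem \ref{theo:good.metric.final} with $\Lambda = 2(\pi f(p)+2)+1$, which depends only on $p$. Proposition \ref{propbilliardrigid} then shows that each $\gamma_i$ is one of the $\gamma_i^+$, or an iterate of one, because iterates of billiard trajectories lift to iterates of closed geodesics. The lengths are $\pi$, $\pi+\mu$ and $2\pi+\mu$, so the sum lies in the stated semigroup. The sum is nonzero because $\mathbf{L}_{AP}(\Pi)>0$ for a nontrivial sweepout class, by the isoperimetric lower bound. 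We take $\mu_1\le\mu_0(\Lambda,U)$, shrinking $U$ as needed.

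The main obstacle is justifying the regularity statement for an arbitrary $\mathcal{F}$-homotopy class $\Pi\subseteq\mathcal{P}^{\mathbf F}_{p,m}$ rather than just for $\omega_p$. This requires that the Allen--Cahn width of the class, defined through $\Z_2$-equivariant maps on double covers, converges to $\mathbf{L}_{AP}(\Pi)$. I would cite the remark that Proposition \ref{widthequality} and Theorem \ref{ChoChoRegularity} hold for any homotopy class. A second delicate point is checking that the $\gamma_i$ can carry multiplicity, which is why the conclusion allows $n_i\in\N$ rather than $\{0,1\}$.
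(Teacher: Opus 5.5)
Your proposal is correct and follows the paper's proof. For (1) you use the same polynomial sweepout $F_d$ with the Crofton--Bezout bound $\pi d$; your direct mass comparison as $g_\mu \to g_{std}$ is just the content of the continuity-of-widths lemma the paper cites. For (2) you use the same Chodosh--Cholsaipant regularity for arbitrary homotopy classes followed by billiard rigidity. Your choice $\Lambda = 2(\pi f(p)+2)+1$ is actually the right one, because Proposition \ref{propbilliardrigid} only gives rigidity below $\Lambda/2$, and the paper's $\mu_0(\pi f(p)+2, U)$ therefore does not quite cover trajectories of length up to $\pi f(p)+2$. Your explicit check that $\mathbf{L}_{AP}(\Pi)>0$ likewise fills a small omission in the paper.
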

\begin{proof}
To verify \ref{enum:upperBound}, let 
\[
A_{d} = \{f(x,y) \; | \; \deg(f) \leq d\}
\]
Note that 
\begin{align*}
\dim(A_{d}) &= \begin{pmatrix}
    d+2 \\ 2
\end{pmatrix} = \frac{(d+2)(d+1)}{2}
\end{align*}
Let $\{p_i^d\}$ be an enumeration of polynomials of degree $\leq d$. For $d \geq 1$, we define a $D(d)-1$-sweepout of $(\stplus, g_{std})$ as follows
\begin{align*}
F_d&: \R \P^{D(d) - 1} \to \mathcal{Z}_1( \stplus, \p \stplus; \Z_2) \\
F_d([a_1 : \cdots : a_{D(d)}]) &= \Big\{ \sum_{i = 1}^{(d+1)(d+2)/2} a_{i} p^{d}_{i}(x,y) = 0\Big\} \cap \stplus
\end{align*}
%
%
%
We see that $F_d$ is $D(d)-1$-sweepout by checking that it is a $1$-sweepout when restricting to the copy of $\R \P^1$ realized by $0 = a_3 = \dots a_{D(d)}$ as then
\[
F_d([a_1: a_2: 0 : \cdots : 0]) = \{a_1 + a_2 x= 0\} \cap \stplus
\]
where by convention, we assume that $p_1^{d} = 1$ and $p_2^{d} = x$. \newline 
\indent We now compute an upper bound on $\mathbf{M}(F)$ using the crofton formula, i.e.
\begin{align*}
\mathbf{M}(F_d([\vec{a}])) &= \frac{1}{2} \mathbf{M}(\tilde{F}_d([\vec{a}])) = \frac{1}{2} \frac{1}{4} \int_{\xi \in S^2} \sharp (\tilde{F}([\vec{a}]) \cap \xi^{\perp})
\end{align*}
where $\tilde{F}_d([\vec{a}]) = \Pi^*(F_d([\vec{a}])) \in \mathcal{Z}_1( S^2, \Z_2)$ is the reflection of the current across the x-y plane (recall we are working with the unit round metric still), and $\xi^{\perp}$ denotes the great circle which lies in the plane in $\R^3$ normal to $\xi$. Note that every such plane is given by $\{\alpha x + \beta y + \gamma z = 0\}$ for some $(\alpha, \beta, \gamma) \in \mathbb{R}^3$. Then 
\begin{align*}
\tilde{F}_d([\vec{a}]) \cap \xi^{\perp} &= \Big\{ \sum_{i = 1}^{(d+1)(d+2)/2} a_{i} p^{d}_{i}(x,y) = 0\Big\} \cap \{x^2 + y^2 + z^2 = 1\} \\
& \qquad \quad \cap \{\alpha x + \beta y + \gamma z = 0\}
\end{align*}
By Bezout's inequality, we have that 
\[
\sharp \left(\tilde{F}_d([\vec{a}]) \cap \xi^{\perp}\right) \leq d \cdot 2 \cdot 1 = 2d
\]
%
%
Thus, we conclude 
\begin{align*}
\mathbf{M}(F_d([\vec{a}])) &\leq \frac{1}{2} \frac{1}{4} \int_{\xi \in S^2} 2d \\
& \leq \frac{1}{8} \cdot 4 \pi \cdot 2d \\
&= \pi \cdot d
\end{align*}
%
%
Now notice that if $f(p) = d$ then $F_{d}$ is a p-sweepout, and so we conclude 
\[
\omega_p(\stplus, g_{std}) \leq \pi f(p)
\]
and by continuity of the p-widths (see \cite[Lemma 2.1]{irie2018density}), we have that if $U$ is a sufficiently small neighborhood of $(\stplus, g_{std})$, then for any $g \in U$,
\[
\omega_p(\stplus, g) \leq \pi f(p) + 1
\]
Similarly, given such a neighborhood $U$, we can apply theorem \ref{theo:good.metric.final} to find $g_{\mu} \in U$ for all $\mu  < \mu_0(\pi f(p) + 2, U)$ and the above bound will also hold for $g = g_{\mu}$. \nl 
\indent To verify part \ref{enum:containment}, fix the metric $g_{\mu}$ as above and choose an $\mathcal{F}$ homotopy class with value $L_{AP}(\Pi) < \pi f(p) + 2$. By theorem \ref{ChoChoRegularity} applied to the homotopy class $\Pi$, we have that 
\[
L_{AP}(\Pi) = \sum_{j = 1}^N \ell(\Gamma_{j})
\]
for $\Gamma_j$ a closed billiard trajectory. By proposition \ref{propbilliardrigid}, each $\Gamma_j$ is one of $\{\gamma_i\}_{i=1}^3$, so we conclude that
\[
L_{AP}(\Pi) = m_1 \cdot \pi + m_2 \cdot (\pi + \mu) + m_3 \cdot (2 \pi + \mu) = (n_1 + n_2 + 2 n_3) \pi + \mu (n_2 + n_3), \; n_i \in \Z^{>0}
\]
\end{proof}
\section{Perturbing the Widths} \label{perturbSection}
\noindent The goal of this section is to prove the following adaptation of Prop 6.11, \cite{chodosh2023p}. 
\begin{proposition} \label{widthInequality}
Fix $p \in \mathbb{N}$ and $\mu_1(p) > 0$ as in lemma \ref{lem:perturbedMetrics}. Then 
\[
\omega_p(\stplus, g_{\mu}) < \omega_{p+1}(\stplus, g_{\mu})
\]
for every $\mu \in (0, \mu_1)$.
\end{proposition}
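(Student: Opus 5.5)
We argue by contradiction, following the strategy of Prop 6.11 in \cite{chodosh2023p}. Suppose $\omega_p(\stplus, g_\mu) = \omega_{p+1}(\stplus, g_\mu) =: L$ for some $\mu \in (0,\mu_1)$. Monotonicity already gives $\omega_p \le \omega_{p+1}$, so equality is the only case to rule out. By Lemma \ref{lem:perturbedMetrics}(\ref{enum:upperBound}), $L \le \pi f(p+1) + 1 < \Lambda/2$ once $\Lambda$ is chosen large enough depending on $p$. Hence, by Theorem \ref{ChoChoRegularity} and Proposition \ref{propbilliardrigid}, every min-max limit realizing $L$ is a union, with multiplicities, of the three billiard trajectories $\gamma_1^+,\gamma_2^+,\gamma_3^+$. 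Moreover, Lemma \ref{lem:perturbedMetrics}(\ref{enum:containment}) shows that every homotopy class value below $\pi f(p)+2$ lies in the discrete set $\{n_1\pi + n_2(\pi+\mu) + n_3(2\pi+\mu)\}$. This is the rigidity input that replaces the ellipsoid analysis in the closed case.

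We then run the Lusternik--Schnirelmann type argument. Take a $(p+1)$-sweepout $\Phi\in\mathcal{P}^{\mathbf{F}}_{p+1,m}$ with $\sup \mathbf{M}\circ\Phi \le L+\delta$. Let $\mathcal{K}$ be the (finite, by the rigidity above) set of stationary integral varifolds of mass $L$ supported on the $\gamma_i^+$. Let $Y\subset \dmn(\Phi)$ be the set of points whose image lies within $\mathbf{F}$-distance $\eta$ of $\mathcal{K}$, and let $Z$ be the closure of its complement. The key claim is that $\Phi|_Z$ is not a $p$-sweepout. If it were, we could pull tight on $Z$ away from $\mathcal{K}$ using the almost-minimizing / deformation theorem (Marques--Neves, in the relative free boundary setting). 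This would produce a $p$-sweepout whose max mass is strictly below $L$, or else a min-max limit of mass $L$ that is not in $\mathcal{K}$. Both contradict $\omega_p = L$ together with Proposition \ref{propbilliardrigid}. Since $\Phi^*\overline\lambda^{p+1}\neq0$ and $\Phi|_Z^*\overline\lambda^p = 0$, the cup product structure forces $\Phi|_Y^*\overline\lambda \neq 0$. In other words, $\Phi|_Y$ is a $1$-sweepout whose image lies in a small $\mathbf{F}$-neighborhood of the finite set $\mathcal{K}$.

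The contradiction comes from showing that no map into a small $\mathbf{F}$-neighborhood of finitely many cycles can detect $\overline\lambda$. Each $V\in\mathcal{K}$ is supported on curves whose associated relative cycles have small neighborhoods. By the no-nontrivial-Jacobi-field condition (item 3 of Theorem \ref{theo:good.metric.final}), transported to the hemi-ellipsoid via the doubling of Remark \ref{doublingRemark}, each such cycle is isolated. The $\mathbf{F}$-neighborhood then deformation retracts, in flat topology, onto small flat balls around finitely many cycles. Small flat balls in $\mathcal{Z}_1(\stplus,\p\stplus;\Z_2)$ are contractible, or at least $\overline\lambda$ restricts to zero on them, by the isoperimetric lemma of Almgren. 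Hence $\Phi|_Y^*\overline\lambda = 0$, which is the desired contradiction.

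We expect the main obstacle to be the boundary version of this last step and of the deformation step. One has to handle varifolds in $\mathcal{K}$ that have components lying in $\p\stplus$, since $\gamma_3^+$ is exactly the boundary curve. A small flat neighborhood in the relative cycle space must also account for mass sliding along $\p M$. The plan is to pass to the double $\tilde M$, which is smooth here by the ellipsoid symmetry. There we would work with $\Z/2$-equivariant cycles, where the closed-manifold arguments of \cite{chodosh2023p} apply, and then descend back to $\stplus$.
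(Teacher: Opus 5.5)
There is a genuine gap in how you identify the critical set. Your overall scheme matches the paper's Lusternik--Schnirelmann argument: split the domain into the part mapping $\bF$-near the critical set and the part away from it, kill $\overline\lambda$ near the critical set via Almgren's small-flat-ball lemma, and combine the cup product with min-max on the far part. But you take $\mathcal{K}$ to be the finite set of mass-$L$ stationary varifolds supported on $\gamma_1^+,\gamma_2^+,\gamma_3^+$, and you assert that a min-max limit of mass $L$ outside $\mathcal{K}$ contradicts Proposition \ref{propbilliardrigid}. It does not:
\begin{itemize}
\item Theorem \ref{ChoChoRegularity}, an Allen--Cahn result, only says that the \emph{value} $\omega_p$ is the length of \emph{some} union of billiard trajectories.
\item Proposition \ref{propbilliardrigid} only classifies closed billiard trajectories.
\item When you pull tight and deform the restricted sweepouts, Almgren--Pitts theory (Proposition \ref{prop:am.vary.dom}) only gives you a critical varifold that is a free boundary stationary \emph{geodesic network} with at most $5^k$ singular points, possibly with junctions.
\end{itemize}
Nothing you cite excludes such networks. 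Already on the round hemisphere, $k\ge 3$ equally spaced half-meridians meeting at the pole are free boundary stationary, and unions of two rotated $3$-stars form a continuous family of mass $3\pi$. So you cannot assume the critical set is finite, nor that it consists of unions of the $\gamma_i^+$.

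The paper instead takes $\cS$ to be \emph{all} free boundary stationary integral varifolds with $\#\sing S+\Vert S\Vert(M)\le\Lambda'$ and mass $L$. The input that controls this set is item (3) of Theorem \ref{theo:good.metric.final}: nondegeneracy of every network in $\cS^{\Lambda}(g_\mu)$, not just of the closed geodesics. Lifted to the double and fed into \cite[Thm 5.17]{chodosh2023p}, it yields only $Lim^{(N)}\cS=\emptyset$, i.e.\ finite iterated-limit rank, not the finiteness or isolatedness you claim.

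Your final step must be replaced accordingly. With $\cS$ infinite, the paper uses the inductive covering argument of \cite[Lemma 6.6, Cor 6.8]{chodosh2023p}. It produces finitely many balls $B^{\bF}_{\eta_j}(V_j)$ covering $\cS$ such that every loop whose varifolds stay in $\bigcup_j B^{\bF}_{2\eta_j}(V_j)$ lies in a single flat $\eps$-ball around some $T\in\mathcal{T}(\{V_1,\dots,V_k\})$. This is a nontrivial statement even for one $V$, because cycles $T$ with $|T|$ close to $V$ need not be flat-close to each other. Your asserted flat deformation retraction onto finitely many small flat balls is therefore unjustified.

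Finally, in the paper it is the $5^k$ singular-point bound from Proposition \ref{prop:am.vary.dom} that places the min-max varifold inside $\cS$ and produces the contradiction. Proposition \ref{propbilliardrigid} and part (2) of Lemma \ref{lem:perturbedMetrics} are not what drive this proposition; they are used for the counting argument of \S\ref{ProofMain}.
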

This section may be skipped on a first pass of the paper. The proof of proposition \ref{widthInequality} is a modification of the analogous proposition in \cite{chodosh2023p} (see also \cite[\S 6]{marques2017existence} or \cite[Appendix A]{aiex2016width}), however there are a few complications due to the fact that $\stplus$ is a manifold with geodesic boundary, and in the course of the proof, we pass between working with $\mathcal{Z}_1(\stplus, \partial \stplus; \Z_2)$ and $\mathcal{Z}_1(S^2; \Z_2)$ to apply the prior work of Chodosh--Mantoulidis \cite{chodosh2023p}. \nl 
\indent We recall a few concepts from \cite{chodosh2023p, li2021min} (see also \cite{donato2022first}) adapted to our setting. Let $M$ be a smooth surface with (potentially empty) geodesic boundary, $\partial M$, so that the topological double with the pullback metric is a smooth, closed Riemannian manifold. Let $\mathcal{X}(M)$ denote the set of all smooth vector fields on $M$ and $\mathcal{X}_{Tan}(M)$ the subset of vector fields so that $X \Big|_{\partial M} \in T \partial M$. We say that $V \in \cI \cV_1(M)$ is $g$-stationary with free boundary if 
\[
\delta V(X) = 0 \qquad \forall X \in \mathcal{X}_{Tan}(M)
\]
\begin{definition} \label{defi:geodesic.network.stationary.lambda}
For $g \in \met^k(M)$, $\Lambda > 0$, we define:
\[ \cS^\Lambda(g) := \{ S \in \cI\cV_1(M) \text{ is } g\text{-stationary with free boundary and } \# \sing S + \Vert S \Vert(M, g) < \Lambda \}. \]
\[ \overline{\cS}^\Lambda(g) := \{ S \in \cI\cV_1(M) \text{ is } g\text{-stationary with free boundary  and } \# \sing S + \Vert S \Vert(M, g) \leq \Lambda \}. \]
\end{definition}
\noindent where $\cI \cV_1(M)$ denotes the set of integral $1$-varifolds. When $M$ is closed, the phrase ``with free boundary" can be omitted as in \cite[Defn 5.1]{chodosh2023p}. For $M$ with a smooth double, $\tilde{M}$, any element $V \in \cS^{\Lambda}(g)$ lifts to $\Pi^*(V) \in \cS^{\Lambda}(\tilde{g})$ (for $\Pi$ as in remark \ref{doublingRemark}), a $\tilde{g}$-stationary integral on $\tilde{M}$. Regular elements of $S^{\Lambda}(g)$ are known as \textit{free boundary geodesic networks} (cf. \cite[Defn 3.1]{donato2022first}), and in our setting, these are exactly the integral varifolds which pull back to $\Pi$-invariant geodesic networks on $\tilde{M}$. \nl 
%
\indent We prove a technical min-max lemma which we will use in the proof of proposition \ref{widthInequality} (compare Prop 2.9, \cite{chodosh2023p})
\begin{proposition}\label{prop:am.vary.dom}
Fix $k \in \N^*$ and assume that 
\[
\Phi_i : Y_i\to\cZ_1(M, \p M ;\bF;\Z_2)
\]
is a sequence of continuous maps for $Y_i$ cubical subcomplexes of $I^k$ so that every $V\in\mathbf{C}(\{\Phi_i\})$ is stationary with free boundary in $M$. Then, at least one of the following holds:
\begin{enumerate}
	\item $\mathbf{C}(\{\Phi_i\})$ contains an integral varifold that is stationary with free boundary and has $\leq 5^k$ singular points
	\item There exists a sequence of continuous $\Psi_i^* : Y_i \to \cZ_1(M, \p M; \bF;\Z_2)$, with each $\Psi_i^*$ homotopic to $\Psi_i$ in the $\cF$-topology, so that
\[
\mathbf{L}_\textrm{AP}(\{\Psi_i^*\}) <  \mathbf{L}_{\textrm{AP}}(\{\Psi_i\}).
\]
\end{enumerate}
\end{proposition}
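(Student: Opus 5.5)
We adapt the proof of Prop 2.9 of \cite{chodosh2023p}, which in turn rests on the almost-minimizing machinery of Pitts and Marques--Neves \cite{marques2017existence}. For manifolds with boundary we use its free boundary analogue from Li--Zhou \cite{li2021min}.

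\textbf{Step 1 (dichotomy).} Suppose alternative (2) fails. The Marques--Neves combinatorial argument (cf. \cite[Thm 4.10]{marques2017existence}), run in the relative setting $\cZ_1(M,\p M;\bF;\Z_2)$ and using only admissible families of $5^k$ concentric annuli, then gives the following. Some $V \in \mathbf{C}(\{\Phi_i\})$ is \emph{$\Z_2$-almost minimizing with free boundary in small annuli}: for every $x \in M$ (interior or boundary point) and at least one annulus from any admissible family of $5^k$ annuli centred at $x$, the free boundary almost-minimizing property holds. The contrapositive runs as follows. If every varifold in the critical set fails this property, the standard deformation lemma pushes mass down locally, and the local deformations are patched together using the cubical structure of $Y_i \subseteq I^k$. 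This produces $\Psi_i^*$ that are $\cF$-homotopic to $\Phi_i$ and satisfy $\mathbf{L}_{AP}(\{\Psi_i^*\}) < \mathbf{L}_{AP}(\{\Phi_i\})$. Since our deformations only use vector fields in $\mathcal{X}_{Tan}(M)$ and relative cycles, the deformations stay inside $\cZ_1(M,\p M;\Z_2)$. Here we also use the hypothesis that every $V$ in the critical set is stationary with free boundary.

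\textbf{Step 2 (counting singular points).} Take $V$ as in Step 1. Then $V$ has \emph{replacements} in all but possibly finitely many annuli. Following \cite{chodosh2023p}, we use a pigeonhole over the $5^k$ annuli to show that there is a set $P$ of at most $5^k$ points away from which $V$ is almost minimizing in sufficiently small balls. This means balls in $M$ centred at interior points, and half-balls centred at points of $\p M$.

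\textbf{Step 3 (regularity away from $P$).} Away from $P$, we reduce to the closed case by doubling as in Remark \ref{doublingRemark}. Since $\p M$ is geodesic and $\tilde g$ is smooth, the lift $\Pi^*(V)$ is $\tilde g$-stationary on $\tilde M$. Moreover, a relative almost-minimizer in a half-ball centred on $\p M$ lifts to an $R$-equivariant almost-minimizer in the doubled ball: a competitor in the half-ball reflects to a symmetric competitor with twice the mass, and conversely a symmetric competitor restricts to a relative one. The one-dimensional regularity of almost-minimizing varifolds (replacements are unions of geodesic segments, and \cite[\S 2]{chodosh2023p} applies) then shows that $\Pi^*(V)$ is a geodesic network away from $\Pi^{-1}(P)$. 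Descending gives an integral varifold on $M$, stationary with free boundary, whose singular set is contained in $P$. Hence it has at most $5^k$ singular points, which is alternative (1).

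\textbf{Main obstacle.} The hardest step is the equivariance issue in Step 3. Almost-minimizing relative to $\Z_2$-chains with free boundary must correspond to almost-minimizing among $R$-invariant chains on $\tilde M$, not among all chains. We therefore need replacements on $\tilde M$ that are $R$-invariant. The plan is to construct the replacements directly on $M$ by relative minimization in half-annuli, as in Li--Zhou \cite{li2021min}, and only then double them. This avoids needing an equivariant min-max theory on $\tilde M$. A secondary point is checking that the counting of $5^k$ points is unaffected when annuli meet $\p M$. This should follow because the covering argument is purely combinatorial in the distance to the centre point.
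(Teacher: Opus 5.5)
\paragraph{Verdict.} There is a genuine gap between Step 1 and Step 2, and that is exactly where the bound $5^k$ has to come from.

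\paragraph{Why Step 2 does not follow from Step 1.} Step 1 yields the classical Pitts/Marques--Neves property. For each fixed centre $x$, $V$ is almost minimizing in one annulus of every admissible \emph{concentric} family about $x$. This is a one-centre-at-a-time statement. It gives replacements in punctured neighbourhoods. So it shows that the set $P$ of points where $V$ fails to be almost minimizing in every small ball is discrete, hence finite. It gives no bound on $|P|$:
\begin{itemize}
\item Annuli about $x$ never contain $x$.
\item Once their radii are below half the mutual separation of the points of $P$, they meet at most one point of $P$.
\end{itemize}
So a concentric family never compares different bad points. No pigeonhole over annuli about a single centre can turn this into $|P|\le 5^k$. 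Nothing in the concentric property rules out, say, a geodesic network with more than $5^k$ isolated crossings.

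\paragraph{What the paper does instead.} Following \cite{chodosh2023p} and \cite[Lemmas 3.1, 3.2]{li2023existence}, the paper runs the dichotomy with $5^k$ balls about \emph{distinct} centres. The target is: some $V\in\mathbf{C}(\{\Phi_i\})$ is almost minimizing with free boundary in at least one of $B_{d/16}(p_1),\dots,B_{d/16}(p_{5^k})$, for any $5^k$ distinct points with minimal pairwise distance $d$.
\begin{itemize}
\item If this fails, every $V$ in the critical set has $5^k$ pairwise well-separated open sets in which it is not almost minimizing. That is all the combinatorial deformation over a $k$-dimensional cubical complex needs. It is carried out through the Li--Zhou discretization, deformation and interpolation theorems \cite{li2021min}, so everything stays in $\cZ_1(M,\p M;\Z_2)$ and alternative (2) follows.
\item If it holds, the count is immediate. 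Were there $5^k$ bad points, the balls $B_{d/16}$ about them would contain none in which $V$ is almost minimizing. Hence $|P|<5^k$.
\end{itemize}

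\paragraph{Step 3 and the fix.} Your Step 3 then finishes the argument. The paper simply cites Pitts and Allard--Almgren for $\Pi^*(V)$; your equivariant-replacement discussion is a reasonable expansion of that step. Replace the concentric families in Step 1 by these separated balls, and the rest of your outline goes through.
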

\begin{proof}
The proof follows that of Prop 2.9, \cite{chodosh2023p}, though we include more details to describe the modifications needed to work in $\cZ_1(M, \p M; \bF; \Z_2)$, most of which are accounted for by work of Li--Zhou \cite{li2021min}. Our goal is to show that at least one of the following hold:
\begin{enumerate}
\item[(1')] There is some $V \in \mathbf{C}(\{\Phi_i\})$ with the property that for any $5^k$ distinct points $\{p_j\}_{j=1}^{5^k}$ with minimal pairwise distance $d$, it holds that $V$ is $\Z_2$-almost minimizing with free boundary (see \cite[Defn 2.10]{donato2022first} or \cite[Defn 3.18]{li2021min}) in at least one of $\{B_{d/16}(p_j)\}_{j=1}^{5^k}$
\item[(2')] Conclusion (2) above holds.
\end{enumerate}
\indent By \cite[Lemma 2.5]{chodosh2023p}, the maps $\{\Phi_i\}$ are continuous in the $\mathcal{F}$ norm and have no concentration of mass. We can apply the discretization and interpolation processes of \cite[Thm 4.12, 4.14]{li2021min} (as opposed to those of Marques--Neves \cite[Prop 3.1]{marques2021morse} in the closed setting) to find homotopic maps which are continuous in the mass topology, which we also label as $\{\Phi_i\}$. \nl 
\indent Suppose that (1') does not hold. We proceed with the strategy of Li \cite[Lemma 3.2]{li2023existence}: via our assumption, for each $V \in \mathbf{C}(S)$, there exist $5^k$ concentric annuli $A(p_j^V, r_j^V - s_j^V, r_j^V + s_j^V)$ such that $V$ is not almost minimizing with free boundary in any one of the annuli. Applying \cite[Lemma 3.1]{li2023existence}, we create discrete sweepouts $S_{\sharp} = \{\psi_i\}$ with $\mathbf{C}(S_{\sharp}) = \mathbf{C}(S)$ but $L(S_{\sharp}) < L(S)$, after using the deformation procedure and corresponding boundary-relevant lemmas and theorems from \cite[Thm 4.21]{li2021min}. \nl 
\indent Now the interpolation theorem of Li--Zhou (\cite[Thm 4.14]{li2021min}) produces the corresponding Almgren extensions, $\{\Psi_i^*\}$, which are now continuous and homotopic to $\{\Psi_i^*\}$, with $L_{AP}(\{\Psi_i^*\}) < L_{AP}(\{\Psi_i\})$, proving (2). \nl 
\indent Thus, WLOG, we can assume that (1') holds. It follows from \cite{pitts1981existence, allard1976structure} that $\Pi^*(V)$ is a stationary geodesic net, and hence $V$ itself is a free boundary geodesic network with $\sharp \; \text{sing} \; V \leq 5^k$.
\end{proof}
\begin{proof}[Proof of proposition \ref{widthInequality}]
We sketch the details in comparison to \cite[Prop 6.11]{chodosh2023p}: \nl 
\indent Fix $p \in \mathbb{N}$ and suppose for contradiction that 
\[
\omega_p(\stplus, g_{\mu}) = \omega_{p+1}(\stplus, g_{\mu})
\]
for all $\mu$ sufficiently small. By \cite[Cor 6.3]{chodosh2023p} (the proof adapts to the boundary case in a straightforward manner), there exists $X \subseteq I^{2p+3}$ with a corresponding homotopy class $\Pi$ so that 
\[
L_{AP}(\Pi, g_{\mu}) = \omega_{p+1}(\stplus, g_{\mu})
\]
For $\Lambda' = L_{AP}(\Pi) + 5^{2p +3}$, $\Lambda = \Lambda' + 1$ we define 
\[
\mathcal{S}^* = \{V \in \overline{\mathcal{S}}^{2 \Lambda'}(g_{\mu}) \; : \; ||V|| \leq L_{AP}(\Pi, g_{\mu})\} \subseteq \cI \cV_1(S^2)
\]
In comparison to the original proof, we've doubled the upper bound, used $\leq$ instead of $=$ in our definition of $\cS$, and are considering integral varifolds \textit{on $S^2$} as opposed to \textit{on $\stplus$}. Note that $\cS^*$ is still compact in the $\mathbf{F}$ topology. Since $\Lambda$ is bounded (by e.g. the Weyl law), by point (3) of \ref{theo:good.metric.final}. we have (by (3) of \cite[Thm 5.17]{chodosh2023p}) that there is $N = N(p)$ such that 
\[
Lim^{(N)} \mathcal{S}^* =\emptyset
\]
i.e. there are no $N$th iterated limit points of $\cS$. Now we consider 
\[
\mathcal{S} = \{V \in \overline{S}^{\Lambda'}(\stplus, g_{\mu}) \; | \; ||V|| = L_{AP}(\Pi, g_{\mu})\}
\]
and note that $\mathcal{S} \embeds \mathcal{S}^*$ via the map $V \to \Pi^*(V)$. This tells us that $Lim^{(N)} \mathcal{S} =\emptyset$ as well and also that $\mathcal{S}$ is compact in the $\mathbf{F}$ topology, since it embeds as a closed subset of $\cS^*$. \nl
\indent Fix $\eps > 0$ so that every continuous map $\Phi: S^1 \to \cZ_1(M, \partial M; \Z_2)$ with $\Phi(S^1) \subseteq B_{\eps}^{\cF}(T)$ for some $T \in \Z_1(M, \p M; \Z_2)$ is homotopically trivial. This is a direct adaptation of \cite[Prop 3.3]{marques2017existence} to the boundary setting, noting that Marques--Neves adapt the original work of Almgren, whose construction applies to relative flat chains (see \cite[Remark 8.3]{almgren1962homotopy}). \nl 
\indent We similarly note that the entirety of \cite[\S 6.2]{chodosh2023p} applies to manifolds with boundary, simply replacing $\cZ_1(M; \Z_2)$ with $\cZ_1(M, \p M, \Z_2)$ and lifting integral varifolds on $M$ to integral varifolds on $\tilde{M}$ to apply the regularity theory of Allard--Almgren \cite{allard1976structure} where needed. This allows us to apply the analogues of \cite[Lemma 6.6]{chodosh2023p}, \cite[Cor 6.8]{chodosh2023p} to conclude that there exist $\{V_1, \dots, V_k\} \subset \cS$, $\eta_1, \dots, \eta_k, \eps_1, \dots, \eps_k > 0$ such that 
\begin{equation} \label{eq:LS.arg.cS.cover}
\cS \subset \cup_{j = 1}^k B_{\eta_j}^{\mathbf{F}}(V_j)
\end{equation}
and for any continuous $\Phi: S^1 \to \cZ_1(M, \p M; \Z_2)$ satisfying
\begin{align*}
|\Phi(t)| \subset \cup_{j = 1}^k B_{2 \eta_j}^{\mathbf{F}}(V_j) & \quad  \forall t \in S^1 \\
\implies \Phi(S^1) \subseteq B_{\eps}^{\mathcal{F}}(T) & \quad \text{for some } T \in \mathcal{T}(\{V_1, \dots, V_k\})
\end{align*}
We now obtain a contradiction as follows: choose a sequence $\{\Phi_i\}_{i=1}^{\infty} \subseteq \Pi$ which has been pulled tight so that every element of $\mathbf{C}(\{\Phi_i\})$ is stationary with free boundary (see \cite[Thm 2.2]{donato2022first} or \cite[4.17]{li2021min}), which by an analogous interpolation theorem (see \cite[Thm 4.12, 4.14]{li2021min}), can be assumed to be continuous w.r.t. the mass topology. \nl 
\indent For $\{\ell_i\}_{i=1}^\infty\subset \N$ to be chosen, define $Y_i \subset X(\ell_i)$ to be the subcomplex consisting of cells $\alpha \in X(\ell_i)$ so that 
\[
|\Phi_i(x)| \not \in \bigcup_{j=1}^k B_{\eta_j}^\bF(V_j)
\]
for every vertex $x\in\alpha_0$. We fix $\ell_i$ sufficiently large so that (i) if $x \in \overline{X\setminus Y_i}$, then
\[
|\Phi_i(x)| \in \bigcup_{j=1}^k B_{2\eta_j}^\bF(V_j)
\]
and (ii) the fineness of $\Phi_i$ restricted to $X(\ell_i)_0$ is less than $1/i$.  \nl 
\indent By the same argument as in Chodosh--Mantoulidis: $\Psi_i: = (\Phi_i)|_{Y_i}$ are $p$-sweepouts for $i$ sufficiently large, Thus,
\begin{align*}
\omega_p(\stplus,g_\mu) \leq \limsup_{i\to\infty}\sup_{x\in Y_i} \mathbf{M}(\Psi_i(x)) &\leq \omega_{p+1}(\stplus,g_\mu) = \omega_p(\stplus,g_\mu) \\
\implies \limsup_{i \to \infty} \sup_{x \in Y_i} \mathbf{M}(\Psi_i(x)) &= \omega_p(\stplus,g_\mu).
\end{align*}
Because $\Psi_i$ are $p$-sweepouts, it follows that alternative (2) of Proposition \ref{prop:am.vary.dom} cannot occur. Thus, by alternative (1), there must exist $V \in \mathbf{C}(\{\Psi_i\})\cap \cS^{\Lambda}(g_\mu) \subset \cS$. By (ii) in the choice of $\ell_i$ and \cite[p. 66]{pitts1981existence}, we can pass to a subsequence and find $x_i \in (Y_i)_0 \subset X(\ell_i)_0$ with 
\[
\lim_{i\to\infty}\bF(|\Phi_i(x_i)|,V) = 0.
\]
On the other hand, since $x_i \in (Y_i)_0$, we have $|\Phi_i(x_i)| \not \in \bigcup_{j=1}^k B_{\eta_j}^\bF(V_j)$ for all $i$, so
\[
V \not \in \bigcup_{j=1}^k B_{\eta_j}^\bF(V_j).
\]
This contradicts \eqref{eq:LS.arg.cS.cover}, completing the proof. 

\end{proof}
\section{Proof of Main Theorem \ref{mainTheorem}} \label{ProofMain}
\noindent We now compute all of the widths of $(\stplus, g_{std})$.
\begin{theorem*}
For $(\stplus, g_{std})$, we have for any $p \in \mathbb{N}^+$,
\[
\omega_p = 2 \pi f(p) = \Big\lfloor \frac{-1 + \sqrt{1 + 8 p}}{2} \Big\rfloor
\]
\end{theorem*}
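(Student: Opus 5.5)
Note first that the displayed formula has typos: by Theorem \ref{mainTheorem} and Lemma \ref{lem:perturbedMetrics}, the claim to prove is $\omega_p(\stplus,g_{std}) = \pi f(p)$, where $f(p)=d$ exactly when $D(d-1)\le p\le D(d)-1$; this equals $\pi\lfloor \tfrac12(-1+\sqrt{1+8p})\rfloor$. The upper bound $\omega_p(\stplus,g_{std})\le \pi f(p)$ is already proved in Lemma \ref{lem:perturbedMetrics} via the polynomial sweepouts $F_d$. So it remains to prove $\omega_p(\stplus,g_{std})\ge \pi f(p)$. I will run the Chodosh--Mantoulidis counting argument on the perturbed metrics $g_\mu$ and then let $\mu\to 0$, using continuity of $\omega_p$ in the metric.

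Fix $P$ and $\mu\in(0,\mu_1(P))$, taking $\mu_1$ to be the minimum over $p\le P$ of the $\mu_1(p)$. For each $p\le P$, Lemma \ref{lem:perturbedMetrics}(2), applied to the homotopy class realizing $\omega_p$ (via \cite[Cor 6.3]{chodosh2023p}), shows that $\omega_p(g_\mu)$ lies in the set
\[
\{n_1\pi+n_2(\pi+\mu)+n_3(2\pi+\mu)\}\setminus\{0\}.
\]
Here we use $\omega_p(g_\mu)\le \pi f(p)+1<\pi f(p)+2$. Hence $\omega_p(g_\mu)=k\pi+j\mu$ with $k=n_1+n_2+2n_3\le f(p)$ and $j=n_2+n_3\le k$. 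Since $\mu$ is small, the values with a given $k$ all lie in $[k\pi,k\pi+k\mu]$. For $k\ge1$, let $N(k)$ be the number of distinct values $k\pi+j\mu$ that are realizable. A value $k\pi+j\mu$ is realizable exactly when some $n_3\le j$ satisfies $j-n_3\ge0$ and $k-2n_3-(j-n_3)\ge 0$, i.e.\ $n_1=k-j-n_3\ge0$. Taking $n_3=j$ requires $k\ge 2j$; more generally the condition is $j\le k$, achieved with $n_3=0$, $n_2=j$, $n_1=k-j$. So $j$ ranges over $0,\dots,k$ and $N(k)=k+1$. This is where the choice of lengths $\pi,\pi+\mu,2\pi+\mu$ enters: the count $k+1$ matches $D(k)-D(k-1)=k+1$.

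By Proposition \ref{widthInequality}, $p\mapsto\omega_p(g_\mu)$ is strictly increasing for $p\le P$, and $\omega_1(g_\mu)>0$. Therefore $\omega_1<\omega_2<\dots<\omega_P$ are distinct elements of the union, over $k\ge1$, of the sets $\{k\pi,\dots,k\pi+k\mu\}$. The number of available values with $k\le d-1$ is
\[
\sum_{k=1}^{d-1}(k+1)=D(d-1)-1.
\]
So if $p\ge D(d-1)$, i.e.\ $f(p)\ge d$, then $\omega_p(g_\mu)$ must have $k\ge d$, giving $\omega_p(g_\mu)\ge d\pi=\pi f(p)$. Letting $\mu\to0$, so that $g_\mu\to g_{std}$, continuity of the widths \cite[Lemma 2.1]{irie2018density} gives $\omega_p(g_{std})\ge\pi f(p)$, which matches the upper bound.

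The main obstacle is making sure the counting is tight. I must check that every $\omega_p(g_\mu)$ indeed falls in the discrete set, including that the zero value is excluded and that no width lands in the gap $(\pi f(p)+1,\pi f(p)+2)$. The latter is handled by the bound from Lemma \ref{lem:perturbedMetrics}(1). I must also check that $\mu$ is small enough that the blocks for different $k$ do not overlap, i.e.\ $k\mu<\pi$ for all $k\le f(P)$. The strict monotonicity from Proposition \ref{widthInequality} is the essential input; the count itself then reduces to the identity $\sum_{k=1}^{d-1}(k+1)=D(d-1)-1$.
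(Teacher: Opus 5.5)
Your proposal is correct and follows the paper's argument. You place each $\omega_p(g_\mu)$ in the discrete set $\{k\pi+j\mu : 0\le j\le k\}$ via Lemma \ref{lem:perturbedMetrics}, count using the strict monotonicity of Proposition \ref{widthInequality}, and let $\mu\to 0$; the only difference is that you use a one-sided pigeonhole (only $D(d-1)-1$ values lie below $d\pi$) together with the direct sweepout upper bound, whereas the paper proves $L=R$ by matching cardinalities (you also correctly read the intended value as $\pi f(p)$).
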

\begin{proof}
As in the proof of theorem \ref{SphereWidths} from \cite{chodosh2023p}, we will show that 
\[
\omega_{D(d)} = \omega_{D(d) + 1} = \cdots = \omega_{D(d+1) - 1} = \pi (d+1)
\]
for all $d$. Let $\mu_1 = \mu_1(p = D(d+1) - 1) > 0$ as in lemma \ref{lem:perturbedMetrics} and suppose that $0 < \mu < \min(\mu_1, 1/2(d+1))$. \nl 
\indent As in \cite[Claim 7.1]{chodosh2023p}, we want to prove:
\begin{align*}
\{\omega_p(\stplus, g_{\mu}) \; : \; p = 1, \dots, D(d+1) - 1\} & = \Big( \{ n_1 \cdot \pi + n_2 \cdot (\pi + \mu) + n_3 \cdot (2\pi + \mu) \; \\
    &  \quad : \; (n_1,n_2,n_3) \in \N^3 \} \backslash \{0\},  \Big) \cap (0, \pi (d+1) + 1]
\end{align*}
Denote the set on the left as $L$ and on the right as $R$. $R \subseteq L$ follows directly from lemma \ref{lem:perturbedMetrics}. For the other containment, we note that by proposition \ref{widthInequality}:
\begin{align*}
\sharp \{\omega_p(\stplus, g_{\mu}) \; : \; p = 1, \dots, D(d+1) - 1\} & = D(d+1) - 1 = \frac{(d+1)(d+4)}{2}
\end{align*}
To compute $|R|$, we argue as in \cite[Claim 7.1]{chodosh2023p} and write
\begin{align*}
n_1 \cdot \pi + n_2 \cdot (\pi + \mu) + n_3 \cdot (2\pi + \mu) &= \pi \cdot (n_1 + n_2 + 2n_3) + \mu (n_2 + n_3)
\end{align*}
And count the number of values of $\{\mu (n_2 +n_3) \; | \; n_1 + n_2 + 2n_3 = j\}$ where $0 \leq j \leq d+1$. However
\[
\{\mu (n_2 + n_3) \; | \; n_1 + n_2 + 2n_3 = j\} = j+1
\]
and so 
\begin{align*}
|R| &= \sum_{j = 1}^{d+1} \sharp \{ \mu (n_2 + n_3) \; | \; n_1 + n_2 + 2n_3 = j\} \\
&= \sum_{j = 1}^{d+1} j+1 \\
&= \frac{(d+1)(d+2)}{2} + d+1 \\
&= \frac{(d+1)(d+4)}{2}
\end{align*}
And so necessarily we have $L = R$. \newline 
\indent Now by induction and the fact that the widths are increasing (proposition \ref{widthInequality}), we see that 
\begin{align*}
\forall p \in \{D(d), D(d) + 1, \dots, D(d+1) - 1\} \\
2\pi \cdot (d+1) \leq \omega_p(\stplus, g_{\mu}) \leq (2\pi + \mu ) \cdot (d+1)
\end{align*}
using continuity of the p-widths in the metrics and noting that $g_{\mu} \xrightarrow{\mu \to 0} g_{std}$, we conclude that 
\begin{equation} \label{widthEquation}
p \in \{D(d), D(d) + 1, \dots, D(d+1) - 1\} \implies \omega_p(\stplus, g_{std}) = 2 \pi \cdot (d+1) = 2\pi \cdot f(p)
\end{equation}
These values are achieved precisely by the sweepouts defined in \ref{lem:perturbedMetrics} by the upper bound computed there and equation \eqref{widthEquation}. The fact that $f(p) = \lfloor \frac{1}{2} \left(-1 + \sqrt{1 + 8p} \right)\rfloor$ follows from algebraic manipulation.
\end{proof}

\bibliography{main}{}
\bibliographystyle{amsalpha}
\end{document}